\DeclareMathAlphabet{\mathpzc}{OT1}{pzc}{m}{it}
\newcommand{\cI}{{\mathcal I}}
\newcommand{\cS}{{\mathcal S}}
\newcommand{\pa}{\parallel}
\newcommand{\li}{l_\infty}
\newtheorem{thm}{Theorem}[section]
\newtheorem{lem}[thm]{Lemma}
\newtheorem{cor}[thm]{Corollary}
\begin{document}

\renewcommand{\thefootnote}{\arabic{footnote}}
 	
\title{Synthetic foundations of cevian geometry, I:\\Fixed points of affine maps in triangle geometry}

\author{\renewcommand{\thefootnote}{\arabic{footnote}}
Igor Minevich and Patrick Morton\footnotemark[1]}
\footnotetext[1]{The authors were supported by an Honors Research Fellowship from the IUPUI Honors Program from 2007-2009, during the period in which this work was completed.}
\maketitle

\begin{section}{Introduction}
\end{section}

Let the cyclocevian mapping for triangle $ABC$ be denoted by $\phi$, so that by definition the traces of $P$ and $\phi(P)$ on the extended sides of triangle $ABC$ lie on a common circle.  By early 2004, the second author had independently discovered and proved the fact that
$$\phi=\iota \circ  \gamma' \circ \iota \eqno{(1)}$$ \smallskip
where $\iota$ is the isotomic mapping for triangle $ABC$ and $\gamma'$ is the isogonal mapping for the anti-complementary triangle of $ABC$.  The proof made extensive use of computer-assisted algebra and absolute barycentric coordinates.  The coordinates of $\phi(P)$ are 8th degree rational functions in the barycentric coordinates $(u, v, w)$ of $P$, but when $\phi$ is conjugated by the isotomic mapping there is a remarkable drop in degree:  $\iota \circ \phi \circ \iota$ becomes a 2nd degree rational function in $(u, v, w)$, which turns out to be the same as $\gamma'(P)$.  The formulas that occur in this proof can be given a nice form, but are difficult to verify by hand.  Since this discovery we have been looking for a synthetic proof for (1). \medskip

Let $K$ denote the complement mapping with respect to triangle $ABC$, so that $K$ is the affine mapping which maps $P$ to a point $K(P)$ on line $PG$ ($G$ the centroid) for which the signed length $K(P)G =\frac{1}{2} GP$.  During the previous year, unbeknownst to us, Grinberg, in Hyacinthos 6423 [gr1], had announced the equivalent formula
$$\phi = \iota \circ K^{-1} \circ \gamma \circ K \circ \iota,\eqno{(2)}$$
($\gamma$ is now the isogonal map for $ABC$ itself) which he derived using the concept of the {\it isotomcomplement} of a point $P$ with respect to $ABC$ (also called the inferior of the isotomic conjugate of $P$, see [y2]).  This is the point
$$Q = K \circ \iota(P)$$ \smallskip
Grinberg derived his formula with the help of homogeneous barycentric coordinates, and noted that he had found synthetic arguments for all but one of the main facts he had used in his proof of (2), which is Theorem 1 in his message [gr1], and is given here as Theorem 2.4 in Section 2 below.  This theorem is also contained in Paul Yiu's message [y2] in slightly disguised form. \medskip

The starting point for this paper is to show how this theorem of Grinberg and Yiu can be proved synthetically, thus filling in the synthetic gap in Grinberg's argument for (2).  The key step is the Midpoint Perspectivity Theorem (Theorem 2.3).  For completeness we give the proof of Grinberg's formula (2) using the cross-ratio in Theorem 2.7.  \medskip

In the process of finding this proof, we discovered that we could synthetically prove many of the other facts concerning the isotomcomplement that have been noted in the Hyacinthos messages.  For example, we prove synthetically Ehrmann's observation [ehr] that the point $Q=K \circ \iota(P)$ is a fixed point of the affine mapping $T_P$ which maps triangle $ABC$ to the cevian triangle $DEF$ of $P$ with respect to $ABC$ (see Theorem 3.2).  We show in addition that $Q$ is the only fixed point of $T_P$ in the finite plane (under suitable hypotheses; see Theorem 3.11) and that $T_P \circ K(P)=P$ (Theorem 3.6). Finally, we show that if $P'=\iota(P)$ is the isotomic conjugate of $P$ with respect to $ABC$, then the affine mapping $T_P \circ T_{P'}$ has a fixed point which is the $P$-ceva conjugate of $Q$, i.e. the perspector of the cevian triangle of $P$ and the anti-cevian triangle of the isotomcomplement $Q$ of $P$ (both with respect to $ABC$).  With this notation the anti-cevian triangle of $Q$ with respect to $ABC$ turns out to be simply $T_{P'}^{-1}(ABC)$.  Again, the set of points for which $P'=\iota(P)$ is on the line at infinity, which is by definition the Steiner circumellipse, can be characterized in terms of the mappings $T_P$ and $T_{P'}$ as the set of ordinary points $P$ for which $T_P \circ T_{P'} = K^{-1}$ equals the inverse of the complement map for triangle $ABC$.  \medskip

What results is a completely synthetic treatment of many new results in the theory of cevian triangles.  This turns out to be an extended and entertaining exercise in classical projective geometry.  Moreover, our development shows that there are many interesting connections between affine maps and the cevian geometry of a triangle. In particular, many important points in triangle geometry can be synthetically characterized as fixed points of specific affine maps. \medskip

In further papers we will explore this connection more fully.  In Part II of this paper [mm] we will study the conic $\mathcal{C}_P=ABCPQ$ on the five points $A,B,C,P,Q$, and the center $Z$ of $\mathcal{C}_P$, i.e., the pole of the line at infinity with respect to $\mathcal{C}_P$.  We will show (synthetically) that if $P$ does not lie on a median of $ABC$ or on $\iota(\li)$, then $Z$ is a fixed point of the affine mapping $\lambda=T_{P'} \circ T_P^{-1}$, and is the unique fixed point if $\mathcal{C}_P$ is a parabola or ellipse.  When $\mathcal{C}_P$ is a hyperbola, there are two more fixed points at infinity.  We will also prove that the point $Z$ is the intersection $GV \cdot T_P(GV)$, where $V$ is the midpoint of the segment joining $K^{-1}(P)$ and $K^{-1}(P')$ and $G$ is the centroid. \medskip

In Part III we will study the generalized orthocenter $H$ of $P$, defined to be the intersection of the lines through the vertices $A, B, C$ which are parallel, respectively, to the lines $QD, QE, QF$.  We will prove that $H$ lies on the conic $\mathcal{C}_P$, along with $P', Q'$, and $H'$, which are the $Q$- and $H$-points defined for the point $P'$ in place of $P$.  We also prove the affine formula
$$H=K^{-1} \circ T_{P'}^{-1} \circ K(Q),$$
and show that the map $\lambda$ defined above satisfies $\lambda(H)=Q$. \medskip

In the final paper of this series (Part IV) we will prove synthetically that $Z$ is the generalized Feuerbach point, the point where the nine point conic $\mathcal{N}_H$ of the quadrangle $ABCH$ is tangent to the inscribed conic $\mathcal{I}$ of $ABC$, the conic which is tangent to the sides at the points $D, E, F$.  The proof proceeds by showing that the map
$$\Phi_P=T_P \circ K^{-1} \circ T_{P'} \circ K^{-1}$$
is a homothety or translation taking the conic $\mathcal{N}_H$ to the inconic $\mathcal{I}$ and fixing the point $Z$.  (See [mo].)  In this part we will also prove synthetically that the isogonal conjugate $\gamma(H)$ of the point $H$ is the perspector of the tangential triangle of $ABC$ (the triangle tangent to the circumcircle at the vertices) and the circumcevian triangle of the point $\gamma(Q)$.  (The circumcevian triangle of a point $R$ with respect to $ABC$ is the triangle $A'B'C'$ inscribed in the circumcircle of $ABC$ with the property that $\{A,R, A'\}$, $\{B, R, B'\}$, and $\{C, R, C'\}$ are triples of collinear points.) \medskip

This paper forms the foundation for all the results that follow in the sequel. \medskip

 {\it Notation.}  We use the results and notation of Coxeter's book [cox], which gives a synthetic development of projective geometry.  See also [ev] for many of the elementary geometrical results and concepts that we use, including directed distance; and [wo] or [y1] or [y3] for definitions of terms in triangle geometry. \medskip

More specifically, we use the following notation.  If $P$ is an ordinary point, and $ABC$ is an ordinary triangle, we have the following cevian triangles:\\

$D_0E_0F_0$, the cevian triangle of the centroid $G$, i.e., the medial triangle of $ABC$, where $D_0 = AG \cdot BC, E_0 = BG \cdot AC, F_0 = CG \cdot AB;$\\

$DEF=D_1E_1F_1$, the cevian triangle for $P$ with respect to $ABC$; where $D=D_1=AP \cdot BC$, etc.;\\

$D_2E_2F_2$, for $Q=K(P')=K \circ \iota(P)$; where $D_2 = AQ \cdot BC$, etc;\\

$D_3E_3F_3$ for $P' = \iota(P)$; where $D_3=AP' \cdot BC$, etc.;\\

$D_4E_4F_4$ for $Q' = K \circ \iota(P')=K(P)$; where $D_4=AQ' \cdot BC$, etc.;\\

$D_5E_5F_5$ for $X$, the fixed point of $\mathcal{S}=T_P \circ T_{P'}$; see Theorem 3.5.\\

\noindent Also, we set $A_i = T_P(D_i), B_i = T_P(E_i), C_i = T_P(F_i)$, for $0 \le i \le 5$.  We will occasionally replace $P$ by the point $P'$, and put primes on the above listed points, to indicate that they correspond to the point $P'$, so that, for example, $D_1'=D_3$, $A_i'=T_{P'}(D_i')$, etc.  \bigskip

\begin{section}{The Grinberg-Yiu theorem.}

In this section we explore some basic properties of the isotomcomplement of a point.  We always take the vertices of the triangle $ABC$ to be ordinary.  We assume throughout this section that $P$ does not lie on the extended sides of triangle $ABC$ or its anti-complementary triangle, so that the vertices of its cevian triangle $DEF$ are always ordinary points.  Usually we will assume $P$ is also ordinary, but the isotomic point $\iota(P)=P'$ of $P$ may be infinite, if $P$ lies on $\iota(\li)$, the Steiner circumellipse for $ABC$.  (Cf. Theorem 3.13. $\li$ is the line at infinity.)  Note, however, that most of our proofs work when $P$ is infinite (in that case $P'$ is ordinary).  As in the introduction, $K$ denotes the complement map with respect to $ABC$ and $Q=K(P')=K \circ \iota (P)$ is the isotomcomplement of $P$. \medskip

Further, we let $D_0=K(A), E_0=K(B)$, and $F_0=K(C)$ denote the midpoints of sides $a=BC, b=AC$, and $c=AB$, repsectively.

\begin{thm}[{Theorem 3 in [gr1]}] Let $ABC$ be a triangle and $D, E, F$ the traces of point $P$ on the sides opposite $A, B$, and $C$. Let $D_0, E_0, F_0$ be the midpoints of the sides opposite $A, B, C$, and let $M_d, M_e, M_f$ be the midpoints of $AD, BE, CF$. Then $D_0M_d, E_0M_e, F_0M_f$ meet at the isotomcomplement $Q = K \circ \iota(P)$ of $P$.
\label{thm:1.1}
\end{thm}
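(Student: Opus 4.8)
The plan is to work one cevian at a time and show that the midpoint $M_d$ of $AD$ lies on the line joining $D_0$ (the midpoint of $BC$) to $Q$; by symmetry the same argument handles $E_0 M_e$ and $F_0 M_f$, and the three lines then concur at $Q$ by construction. First I would recall the two defining facts about $Q$: it is $K(P')$ where $P'=\iota(P)$ is the isotomic conjugate of $P$, so if $D'=AP'\cdot BC$ is the trace of $P'$ on $BC$, then $D'$ and $D$ are reflections of each other in $D_0$ (the definition of isotomic conjugation on side $a$). Second, $Q=K(P')$ lies on the segment $P'G$ with $GQ=\tfrac12 GP'$, and more usefully $K$ sends the vertex $A$ to $D_0$, so $K$ maps the line $AP'$ to the line $D_0\,K(P')=D_0Q$. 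Hence $Q$ lies on the line through $D_0$ parallel to... no — rather, since $K$ is the homothety $h(G,-\tfrac12)$, the line $D_0Q=K(AP')$ is the image of line $AP'=AD'$ under $K$, so $D_0Q \parallel AD'$.

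So the crux reduces to a pure midpoint/parallel statement in the plane of the single line $BC$ together with $A$: given $D$ and its reflection $D'$ in the midpoint $D_0$ of $BC$, the line through $D_0$ parallel to $AD'$ passes through the midpoint $M_d$ of $AD$. This I would prove by a one-line vector or similar-triangles computation: in the triangle $A D D'$ (or using the median configuration), $D_0$ is the midpoint of $D D'$ and $M_d$ is the midpoint of $D A$, so by the midpoint theorem $D_0 M_d \parallel D' A$ and $D_0 M_d = \tfrac12 D'A$. That is exactly the parallelism we just derived for $D_0 Q$, and since both $D_0 Q$ and $D_0 M_d$ are lines through $D_0$ parallel to $AD'$, they coincide; hence $M_d \in D_0 Q$. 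Running the identical argument on the other two sides gives $M_e \in E_0 Q$ and $M_f\in F_0 Q$, so $D_0M_d$, $E_0M_e$, $F_0M_f$ all pass through $Q$.

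The main obstacle — and the only place one must be careful — is the degenerate cases flagged in the section's standing hypotheses: when $P$ (equivalently $P'$) is at infinity the trace $D'$ or the midpoint constructions need reinterpreting, and when $P$ lies on a median, $D$ and $D'$ may coincide with $D_0$ so that "the line through $D_0$ parallel to $AD'$" degenerates. Under the running assumption that $P$ avoids the extended sides of $ABC$ and of its anticomplementary triangle the cevian feet are ordinary, and the argument goes through verbatim; the infinite-$P$ case can be absorbed by replacing the midpoint theorem with the statement that $K=h(G,-\tfrac12)$ carries $AP'$ to $D_0Q$ and noting $M_d$ is the image of $D$ under $h(A,\tfrac12)$, then checking the two homotheties interact as claimed. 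I would also remark that this theorem is essentially the Midpoint Perspectivity Theorem (Theorem 2.3) applied to the triangle $ABC$ and the point $P'$, which is the slicker way to package the three-fold symmetry once that theorem is available.
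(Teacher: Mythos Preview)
Your argument is correct and is essentially the paper's own proof: both hinge on the midline of triangle $ADD_3$ giving $D_0M_d \parallel AD_3 = AP'$, combined with the fact that the homothety $K=h(G,-\tfrac12)$ carries $AP'$ to $D_0Q$ (the paper writes this step out as the similar-triangles computation $GMD_0 \sim GP'A$, which is the same thing). Your closing remark is backwards, however: in the paper Theorem~2.1 is proved first and is \emph{used} in the proof of the Midpoint Perspectivity Theorem (Theorem~2.3), not deduced from it.
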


\begin{proof}
(See Figure 1.)  We may assume $P \ne$ the centroid $G$ of $ABC$. Without loss of generality, assume $P$ is not on $AG$.  Let $D_3$ be the trace of $P'$ on side $BC$.  Then $D_0M_d$ is the midline of $\Delta DAD_3$ and is thus parallel to $AD_3 = AP'$. Draw $P'G$, and let $M$ be the intersection $D_0M_d \cdot P'G$. Since $D_0M_d$ is parallel to $AP'$, the triangles $GMD_0$ and $GP'A$ are similar; so $AG = 2 GD_0$ implies $GP' = 2 GM$. Thus, $M = K(P')=Q$ and $D_0M_d$ intersects $GP'$ at $Q$; similarly, so do $E_0M_e$ and $F_0M_f$. If $P$ lies on $BG$ it cannot lie on $CG$; in that case, $Q=D_0M_d \cdot F_0M_f$, and $P$ on $BG$ implies that $P', Q$, and $M_e$ also lie on $BG$, giving the assertion.  This argument applies as long as the point $M$ is ordinary. If $M = P'G\cdot \li$ is infinite, then $P'G \pa D_0M_d \pa P'A$ implies, since $P \ne G$, that $P'$ is infinite and $M = P' = K(P') = Q$. Since $BP'$ and $CP'$ are now parallel to $AP'$, the lines $D_0M_d, E_0M_e, F_0M_f$ meet at $P' = Q$.
\end{proof}

\begin{figure}[htbp]
\[\includegraphics[width=4.5in]{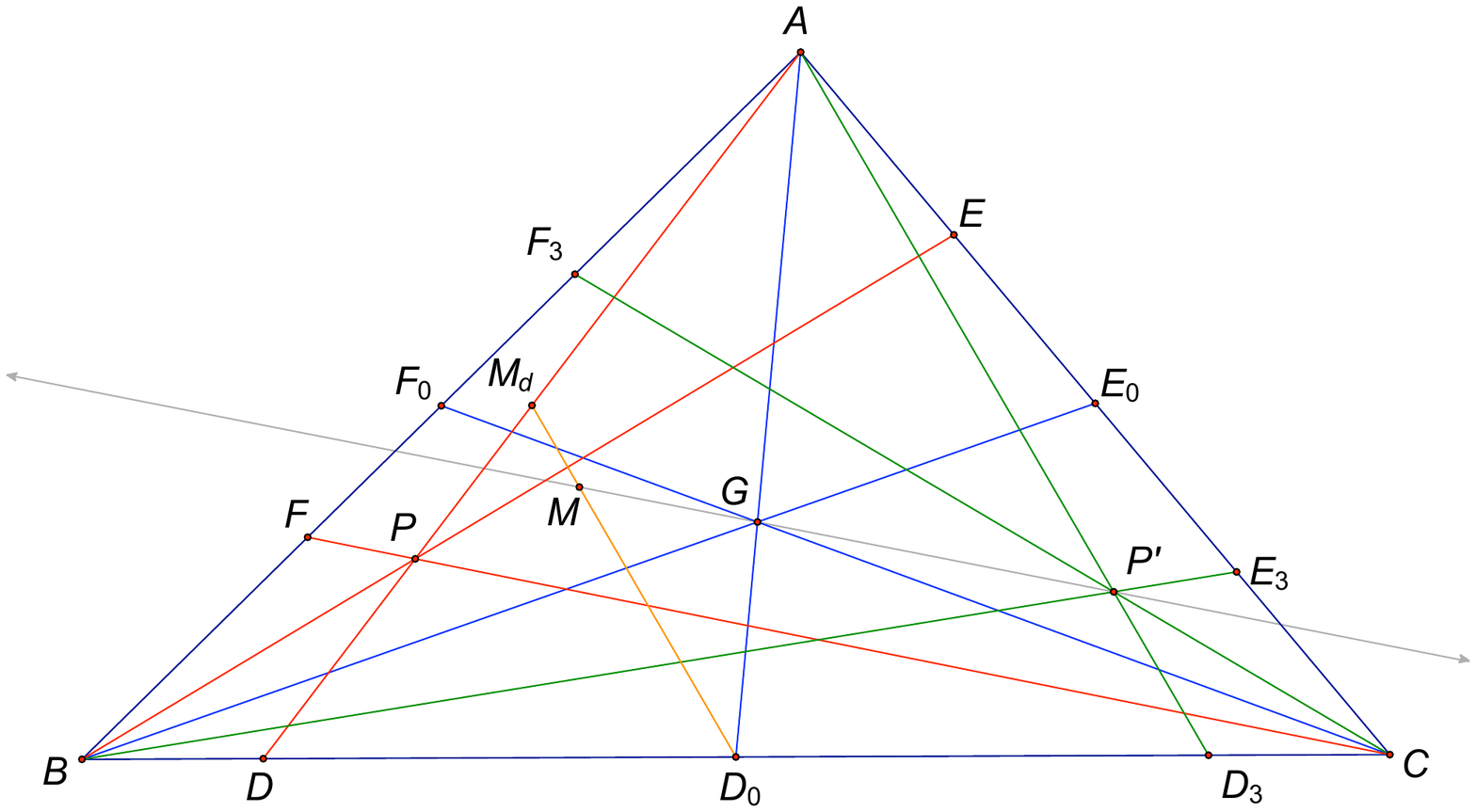}\]
\caption{Proof of Theorem 2.1}
\label{fig:1.1}
\end{figure}

\begin{cor}
$D_0M_d = D_0Q$ is parallel to $AP'$ and $K(D_3) = M_d$.
\label{cor:1.1}
\end{cor}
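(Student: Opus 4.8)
The plan is to extract both assertions directly from the construction carried out in the proof of Theorem 2.1, rather than re-prove anything from scratch. Recall from that proof that $D_0 M_d$ is the midline of $\triangle D A D_3$ joining the midpoint $D_0$ of $DD_3$ (indeed $D_0$ is the midpoint of $BC$, hence of $DD_3$, since $D$ and $D_3$ are isotomic conjugate traces on $BC$) to the midpoint $M_d$ of $AD$. By the midline theorem this segment is parallel to the third side $AD_3 = AP'$, which is the first half of the parallelism claim. Since the proof of Theorem 2.1 established that the line $D_0 M_d$ passes through $Q$, we may write $D_0 M_d = D_0 Q$ as lines; this gives the stated identification, with the understanding (as in the theorem) that when $P' = Q$ is infinite the "line $D_0 Q$" degenerates and the statement is read projectively.

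For the second assertion, $K(D_3) = M_d$, I would argue as follows. The midline $D_0 M_d$ of $\triangle D A D_3$ cuts $A D_3$ at its midpoint; but that midpoint of $A D_3$ is precisely $M_d'$ in the primed notation, so one must instead use the complement map directly. The cleanest route: $K$ is the homothety (affinity) fixing $G$ with ratio $-\tfrac12$, so $K$ sends $A$ to $D_0$ and sends the line $AP' = AD_3$ to the line through $D_0$ parallel to it, which by the first part of the corollary is exactly the line $D_0 M_d = D_0 Q$. Hence $K(D_3)$ lies on $D_0 M_d$. On the other hand $K$ maps the line $BC$ (which contains $D_3$) to the midline $E_0 F_0$ of $ABC$, so $K(D_3)$ lies on $E_0 F_0$ as well. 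Thus $K(D_3) = D_0 M_d \cdot E_0 F_0$. Finally, $M_d$ is the midpoint of $AD$, and $D$ lies on $BC$ while $A$ is a vertex, so $M_d$ lies on the midline $E_0 F_0$ (the locus of midpoints of cevians from... more simply, $M_d = K\text{-image reasoning}$); one checks $M_d \in D_0 M_d$ trivially and $M_d \in E_0 F_0$ because $E_0, F_0$ are midpoints of $AC, AB$ and $M_d$ is the midpoint of $AD$ with $D$ on $BC$, so $M_d$ is the image of $D$ under the homothety at $A$ of ratio $\tfrac12$, which carries $BC$ to the line through $E_0, F_0$ parallel to $BC$, i.e. to $E_0F_0$. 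Therefore $K(D_3)$ and $M_d$ are the same intersection point, proving $K(D_3) = M_d$.

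I expect the main obstacle to be purely bookkeeping: making sure the two characterizing lines for $K(D_3)$ are genuinely distinct (so that their intersection is a well-defined point), and handling the degenerate case from Theorem 2.1 in which $P'$ is infinite. When $P'$ is at infinity the line $AP'$ is the direction of $AD_3$, the "midline" $D_0 M_d$ is parallel to it, and $D_3$ itself is infinite; one then interprets $K(D_3)$ as the image of an infinite point under the affine map $K$, namely the same infinite point (since $K$ fixes the line at infinity pointwise as a $-\tfrac12$ homothety? — no, $K$ fixes $\ell_\infty$ setwise and acts as the identity on directions), so $K(D_3) = D_3$, and simultaneously $M_d$, the midpoint of the cevian $AD$ where $D$ is now the infinite point $D_3$, is also that same infinite point. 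So the identity persists in the limiting case, and the only real care needed is to phrase it so that the compiled statement covers it. Everything else is an immediate consequence of the midline theorem and the defining property of $K$ as the homothety $(G, -\tfrac12)$.
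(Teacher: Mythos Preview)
Your main argument is correct and is precisely what the paper intends: the corollary is stated there with no separate proof, as both claims are immediate consequences of the proof of Theorem~2.1 (midline parallelism plus $Q\in D_0M_d$ for the first claim; and for $K(D_3)=M_d$, the observation that $K$ carries the line $AD_3$ to the parallel line $D_0M_d$ and carries $BC$ to $E_0F_0$, so $K(D_3)$ and $M_d$ are both the intersection $D_0M_d\cdot E_0F_0$).

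Your closing discussion of the degenerate case $P'\in\ell_\infty$ is muddled, though. When $P'$ is infinite, $AP'$ is the ordinary line through $A$ in the direction $P'$, and $D_3=AP'\cdot BC$ is generically a \emph{finite} point, not an infinite one; moreover $D$ (the trace of $P$) is never the same as $D_3$ (the trace of $P'$) under the standing hypotheses, and $M_d$ remains the ordinary midpoint of $AD$. In fact your intersection argument $K(D_3)=D_0M_d\cdot E_0F_0=M_d$ already covers this case verbatim, since both lines and the point $D_3$ are ordinary; no separate treatment is needed. You should simply delete the last paragraph's case analysis (and the parenthetical hesitation about whether $K$ acts as the identity on $\ell_\infty$: it does, since a homothety fixes every point at infinity).
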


The above theorem is in Altshiller-Court [ac, p.165, Supp. Ex. 10], except for the identification of the intersection of the lines $D_0M_d, E_0M_e, F_0M_f$ as the isotomcomplement of $P$. \bigskip

\begin{figure}[hbtp]
\[\includegraphics[width=4.5in]{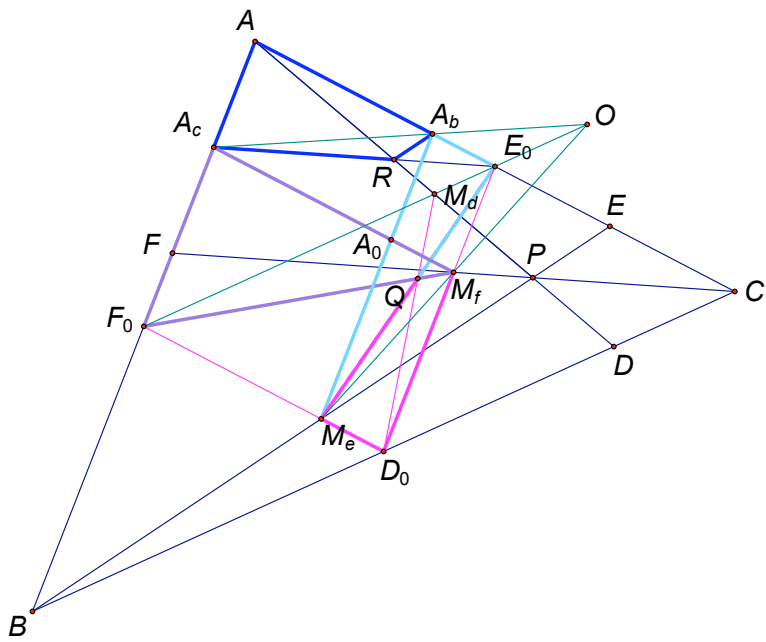}\]
\caption{Midpoint Perspectivity Theorem}
\label{fig:1.2}
\end{figure}

\begin{thm}[Midpoint Perspectivity Theorem]
In triangle $ABC$, let $E$ and $F$ be points on sides $AC$ and $AB$. Let:
\begin{center}
$E_0, F_0$ be the midpoints of $b = AC$ and $c = AB$,\\
$A_b, A_c$ be the midpoints of $AE$ and $AF$,\\
$M_e, M_f$ be the midpoints of $BE$ and $CF$.
\end{center}
Then the triangles $A_bE_0M_e$ and $A_cF_0M_f$ are perspective.
\label{thm:1.2}
\end{thm}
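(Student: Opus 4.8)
The plan is to prove the stronger fact that the two triangles are perspective \emph{from a point}, and to identify that point: it will be the midpoint of $AT$, where $T = BC \cdot EF$. Assume first that $T$ is an ordinary point; the case $EF \pa BC$ is handled at the end, and we keep the usual general-position hypotheses on $E$ and $F$ (they avoid the vertices of $ABC$) so that both triangles are genuinely triangles. Let $\eta$ be the homothety with center $A$ and ratio $\tfrac12$, so that $\eta$ sends every point $X$ to the midpoint of $AX$; thus $\eta(E) = A_b$, $\eta(F) = A_c$, $\eta(B) = F_0$, $\eta(C) = E_0$. I will show that the three lines $A_bA_c$, $E_0F_0$, $M_eM_f$ joining corresponding vertices of $A_bE_0M_e$ and $A_cF_0M_f$ all pass through $\eta(T)$; by definition this is exactly the assertion that the triangles are perspective, with center $\eta(T)$.

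Two of the three incidences are formal. As a collineation sending $E\mapsto A_b$ and $F\mapsto A_c$, $\eta$ carries the line $EF$ onto the line $A_bA_c$, and since $T\in EF$ we get $\eta(T)\in A_bA_c$. In the same way $\eta$ carries $BC$ onto $E_0F_0$, and $T\in BC$ gives $\eta(T)\in E_0F_0$.

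The remaining incidence $\eta(T)\in M_eM_f$ is the heart of the matter, and the key is to recognize it as a classical collinearity. Consider the complete quadrilateral whose four sides are the lines $AB$, $BC$, $CA$, $EF$. Its six vertices are $A$, $B$, $C$ together with $E = CA\cdot EF$, $F = AB\cdot EF$, and $T = BC\cdot EF$; its three pairs of opposite vertices are $\{A,T\}$, $\{B,E\}$, $\{C,F\}$, so its three diagonals are the lines $AT$, $BE$, $CF$. By the classical theorem of Newton and Gauss, the midpoints of the three diagonals of a complete quadrilateral are collinear. Here those midpoints are exactly $\eta(T)$ (the midpoint of $AT$), $M_e$ (the midpoint of $BE$), and $M_f$ (the midpoint of $CF$); hence $\eta(T)$ lies on $M_eM_f$. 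Therefore $A_bA_c$, $E_0F_0$, $M_eM_f$ concur at $\eta(T)$, which finishes the proof.

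The one genuinely creative step is recognizing the complete quadrilateral $\{AB, BC, CA, EF\}$ and reading off its diagonals $AT$, $BE$, $CF$; everything else is the Newton--Gauss line together with the bookkeeping of the homothety $\eta$. It remains to treat the degenerate case $EF\pa BC$. Then the four points $B, F, E, C$ form a trapezoid with $BC\pa FE$, so the midpoints $M_e$, $M_f$ of its diagonals $BE$, $CF$ lie on the midline of the trapezoid and $M_eM_f\pa BC$; since also $A_bA_c\pa EF\pa BC$ (midline of $\triangle AEF$) and $E_0F_0\pa BC$ (midline of $\triangle ABC$), the three lines concur at the point at infinity of $BC$ — which is precisely $\eta(T)$ in this limiting case — and the triangles are again perspective, from that infinite center. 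Note that the argument uses no hypothesis about any auxiliary point $P$, so it holds for arbitrary $E$ on $CA$ and $F$ on $AB$ in general position, which is what makes the theorem so useful as a lemma.
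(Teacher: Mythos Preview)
Your proof is correct and takes a genuinely different route from the paper's.

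The paper argues via harmonic sets: it first uses the complete quadrangle $D_0M_eQM_f$ (where $D_0$ is the midpoint of $BC$, $Q$ is the isotomcomplement of $P$, and one must first introduce $P=BE\cdot CF$ and $D=AP\cdot BC$) together with Theorem~2.1 to show that $O=M_eM_f\cdot E_0F_0$ is the harmonic conjugate of $M_d$ (the midpoint of $AD$) with respect to $E_0$ and $F_0$; it then uses a second quadrangle $AA_bRA_c$ to show that $A_bA_c\cdot E_0F_0$ satisfies the same harmonic relation, forcing the two intersection points to coincide. Your argument instead applies the homothety $\eta$ with center $A$ and ratio $\tfrac12$ to get $\eta(T)\in A_bA_c$ and $\eta(T)\in E_0F_0$ for free, and then invokes the Newton--Gauss line of the complete quadrilateral $\{AB,BC,CA,EF\}$ (diagonals $AT$, $BE$, $CF$) to place $\eta(T)$ on $M_eM_f$. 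What this buys you is a self-contained proof that does not depend on Theorem~2.1 or any auxiliary point $P$, and which explicitly identifies the perspector as the midpoint of $AT$; the paper's approach, by contrast, ties the perspector to $M_d$ via a harmonic relation on $E_0F_0$, which fits its running program of relating everything back to the isotomcomplement $Q$ (see the Remark following the proof).
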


\begin{proof}
(See Figure 2.)  We want to show that $A_bA_c, E_0F_0, M_eM_f$ are concurrent at a point $O$. Using quadrangle $D_0M_eQM_f$, we have: $QM_f\cdot D_0M_e = F_0$ by Theorem \ref{thm:1.1} and the fact that $M_e$ lies on the midline $D_0F_0$. Similarly, $QM_e\cdot D_0M_f = E_0$ and $D_0Q\cdot E_0F_0 = M_d$. Defining $O = M_eM_f\cdot E_0F_0$, we obtain the harmonic relation $H(E_0F_0, M_dO)$, by the definition of a harmonic set. See \cite{cox}.\\
\\
Now we use quadrangle $AA_bRA_c$, where $R = A_bF_0\cdot A_cE_0$. Since $A_cE_0$ is the median of $AFC$, we have $A_cE_0\cdot AP =$ midpoint of $AP$ and similarly $A_bF_0\cdot AP =$ midpoint of $AP$, so $A_cE_0, A_bF_0, AP$ are concurrent at $R$. This implies that $AA_b\cdot A_cR = E_0$; $AA_c\cdot A_bR = F_0$; and $AR\cdot E_0F_0 = AP \cdot E_0F_0 = M_d$ in quadrangle $AA_bRA_c$. But this says that $A_bA_c\cdot E_0F_0$ is the harmonic conjugate of $M_d$ with respect to $E_0F_0$, which is $O$ by the first part of the argument. Therefore, $A_bA_c, E_0F_0, M_eM_f$ are concurrent at $O$.
\end{proof}

\noindent {\bf Remark.} Denoting the point $O$ in the proof of this theorem by $O_a$, there are similar points $O_b, O_c$ for the vertices $B, C$ (see Figure \ref{fig:1.2}). It is not hard to show that $O_a = K(E_3F_3\cdot BC), O_b = K(D_3F_3\cdot AC), O_c = K(D_3E_3\cdot AB)$, where $D_3, E_3, F_3$ are the traces of the point $P' = \iota(P)$. It follows that the points $O_a, O_b, O_c$ are collinear, and that the line through these points is the trilinear polar of the point $Q$ with respect to triangle $D_0E_0F_0$.

\begin{thm}[{Theorem 1.3 in [gr1]}] With $D, E, F$ as before, let $A_0$, $B_0$, $C_0$ be the midpoints of $EF, DF$, and $DE$, respectively. Then the lines $AA_0$, $BB_0$, $CC_0$ meet at the isotomcomplement $Q$ of $P$.
\label{thm:1.3}
\end{thm}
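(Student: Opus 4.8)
\smallskip
The plan is to reduce the assertion to the Midpoint Perspectivity Theorem (Theorem~\ref{thm:1.2}) together with Theorem~\ref{thm:1.1}, via Desargues' theorem. It is enough to show that $A$, $A_0$, $Q$ are collinear; the analogous facts for $B,B_0,Q$ and $C,C_0,Q$ then follow by relabeling the vertices, and all three applications produce the \emph{same} point $Q$, since the isotomcomplement is a single point attached to $P$. Hence the three lines $AA_0$, $BB_0$, $CC_0$ will all contain $Q$.

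First I would apply Theorem~\ref{thm:1.2} at the vertex $A$, with $E$ on $AC$ and $F$ on $AB$ the traces of $P$. It gives that the triangles $A_bE_0M_e$ and $A_cF_0M_f$ are perspective from a point, where $A_b$, $A_c$ are the midpoints of $AE$, $AF$ and $M_e$, $M_f$ are the midpoints of $BE$, $CF$. By Desargues' theorem (see~\cite{cox}) these triangles are then perspective from a line, i.e.\ the three intersection points of corresponding sides,
\[
X_1 = A_bE_0\cdot A_cF_0,\qquad X_2 = E_0M_e\cdot F_0M_f,\qquad X_3 = M_eA_b\cdot M_fA_c,
\]
are collinear. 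Next I would identify them. Since $E$ lies on line $AC$, the midpoints $A_b$ of $AE$ and $E_0$ of $AC$ both lie on $AC$, so $A_bE_0=AC$; likewise $A_cF_0=AB$, so $X_1=A$. By Theorem~\ref{thm:1.1}, the lines $E_0M_e$ and $F_0M_f$ both pass through $Q$, so $X_2=Q$. If I can also show $X_3=A_0$, then the collinearity of $X_1,X_2,X_3$ says exactly that $A$, $Q$, $A_0$ are collinear, completing the proof.

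The identification of $X_3$ is the step that needs the most care. The line $A_bM_e$ joins the midpoints of the sides $AE$ and $BE$ of triangle $ABE$, so it is the image of line $AB$ under the homothety with center $E$ and ratio $\tfrac12$; since $F$ lies on $AB$, the image of $F$ under this homothety --- which is the midpoint $A_0$ of $EF$ --- lies on $A_bM_e$. Symmetrically, $A_cM_f$ is the image of line $AC$ under the homothety with center $F$ and ratio $\tfrac12$, and the image of $E\in AC$ is again the midpoint $A_0$ of $FE$, so $A_0$ lies on $A_cM_f$ too; as $A_bM_e\parallel AB$ while $A_cM_f\parallel AC$, these are distinct lines, so $X_3=A_0$. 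The same parallelism shows $A\notin A_bM_e$ (that line is parallel to, but not equal to, $AB$), so $X_1\ne X_3$ and the Desargues line is genuinely a line. Finally, degenerate positions of $P$ --- for example $P$ on a median of $ABC$, where one of the triangles of Theorem~\ref{thm:1.2} degenerates or its center of perspectivity runs off to infinity --- are dealt with as in the proof of Theorem~\ref{thm:1.1}, by replacing the coincident lines with parallels and interpreting the concurrency point at infinity when necessary.
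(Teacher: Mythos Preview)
Your proof is correct and follows essentially the same route as the paper's own argument: apply the Midpoint Perspectivity Theorem to get that $A_bE_0M_e$ and $A_cF_0M_f$ are perspective from a point, invoke Desargues to make them perspective from a line, and identify the three side-intersections as $A$, $Q$, and $A_0$ (the paper's midline justification for $A_bM_e\cdot A_cM_f=A_0$ is your homothety argument). The extra care you take in checking distinctness of the lines and handling degenerate positions of $P$ goes slightly beyond what the paper spells out, but the underlying strategy is the same.
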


\begin{proof}
Using the notation of Theorem 2.1, $A_bM_e \cdot A_cM_f = A_0$ because $A_bM_e$ is a midline of $\Delta AEB$ and so passes through the midpoint of $EF$; similarly, $A_cM_f$ is a midline of $\Delta AFC$ and also passes through the midpoint of $FE$. Thus we have $A_bE_0\cdot A_cF_0 = A$; $A_bM_e\cdot A_cM_f = A_0$; and $E_0M_e\cdot F_0M_f = Q$, by Theorem \ref{thm:1.1}. By Theorem \ref{thm:1.2} and Desargues' theorem, these three points are collinear. Similarly, $B, B_0$, and $Q$ are collinear, and $C, C_0$, and $Q$ are collinear.
\end{proof}

Part of the statement of this proposition is in Altshiller-Court [ac, p. 165, Supp. Ex. 8]. \medskip

We now prove the following surprising theorem, which combines everything we have proved so far.

\begin{thm}[{Quadrilateral Half-turn Theorem}] If $Q'=K(P)$ is the isotomcomplement of $P'$, the complete quadrilaterals
$$(AP)(AQ)(D_0Q)(D_0A_0) \ \ and \ \ (D_0Q')(D_0A_0')(AP')(AQ')$$
are perspective by a Euclidean half-turn about the point $N_1=$ midpoint of $AD_0 =$ midpoint of $E_0F_0$.  In particular, corresponding sides in these quadrilaterals are parallel.
\end{thm}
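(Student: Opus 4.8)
\emph{Proof proposal.} The plan is to produce the perspectivity explicitly: let $\sigma$ be the Euclidean half-turn about $N_1$, and show directly that $\sigma$ carries the first complete quadrilateral onto the second. First I would verify that $N_1$ really is the common midpoint of $AD_0$ and $E_0F_0$: the quadrilateral $AF_0D_0E_0$ is a parallelogram (its opposite sides $AF_0$ and $E_0D_0$ are both parallel to $AB$, and $AE_0$ and $F_0D_0$ are both parallel to $AC$), so its diagonals $AD_0$ and $E_0F_0$ bisect each other; call the common midpoint $N_1$. Hence $\sigma$ interchanges $A\leftrightarrow D_0$ and $E_0\leftrightarrow F_0$, and, being a half-turn, it maps every line to a parallel line and every point $X$ to the point $X'$ with $X,N_1,X'$ collinear and $N_1$ the midpoint of $XX'$. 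It therefore suffices to prove that $\sigma$ sends the four lines $AP,\ AQ,\ D_0Q,\ D_0A_0$ of the first complete quadrilateral, respectively, to the four lines $D_0Q',\ D_0A_0',\ AP',\ AQ'$ of the second; the six vertices then correspond automatically (since $\sigma$ is a collineation), parallelism of corresponding sides is automatic, and the lines joining corresponding vertices all pass through $N_1$.

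Two of these correspondences are immediate from Corollary \ref{cor:1.1}, which gives $D_0Q\pa AP'$. Since $\sigma(D_0Q)$ passes through $\sigma(D_0)=A$ and is parallel to $D_0Q\pa AP'$, it is the line through $A$ parallel to $AP'$, namely $AP'$ itself; thus $\sigma(D_0Q)=AP'$. Applying Corollary \ref{cor:1.1} with $P$ replaced by $P'$ — so that $\iota(P')=P$ and the role of $Q$ is played by $Q'=K(P)$ — gives $D_0Q'\pa AP$, and the same argument yields $\sigma(AP)=D_0Q'$.

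The remaining two correspondences reduce, via Theorem \ref{thm:1.3}, to the single new fact $\sigma(A_0)=A_0'$. Indeed, Theorem \ref{thm:1.3} identifies $AQ$ with $AA_0$ and (applied to $P'$) $AQ'$ with $AA_0'$, so once $\sigma(A_0)=A_0'$ is known we get $\sigma(AQ)=\sigma(AA_0)=D_0A_0'$ and $\sigma(D_0A_0)=AA_0'=AQ'$. To see $\sigma(A_0)=A_0'$ I would show that $A_0$ and $A_0'$ have the same midpoint $N_1$ as $E_0$ and $F_0$. By the definition of the isotomic conjugate, the trace $E_3$ of $P'$ on $AC$ is the reflection of the trace $E$ of $P$ in the midpoint $E_0$ of $AC$, so $E_0$ is the midpoint of $EE_3$; likewise $F_0$ is the midpoint of $FF_3$. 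Since $A_0$ is the midpoint of $EF$ and $A_0'$ is the midpoint of $E_3F_3$, the midpoint of $A_0A_0'$ equals $\tfrac14(E+E_3+F+F_3)$, which is also the midpoint of $E_0F_0$; equivalently $E_0A_0'F_0A_0$ is the Varignon parallelogram of the quadrangle $EE_3F_3F$, so its diagonals $E_0F_0$ and $A_0A_0'$ bisect each other at $N_1$. This completes all four line-correspondences, and hence the theorem.

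I expect the only real obstacle to be the bookkeeping of degenerate cases permitted by the standing hypotheses of the section — for instance $P$ on a median (where one must check $A\neq A_0$ so that the line $AQ=AA_0$ is defined) or $P$ on the Steiner circumellipse (where $P'$, and with it $Q$, is at infinity, so $D_0Q$ must be read as the line through $D_0$ in the relevant direction). In each such case the midpoint identities above were phrased with directed lengths and remain valid without exception, so these cases should require only cosmetic adjustment rather than a new idea.
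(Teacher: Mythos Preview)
Your argument is correct and, in fact, cleaner than the paper's. Both proofs share the Varignon step — the parallelogram $E_0A_0'F_0A_0$ on the midpoints of the quadrangle $EE_3F_3F$ (the paper writes it as $F_1E_1E_3F_3$) — to get $\sigma(A_0)=A_0'$. From there, however, the routes diverge. You work at the level of the four \emph{lines} and simply verify that $\sigma$ carries each side of the first complete quadrilateral to the matching side of the second, using only Corollary~\ref{cor:1.1} (for $AP\leftrightarrow D_0Q'$ and $D_0Q\leftrightarrow AP'$) and Theorem~\ref{thm:1.3} (for $AQ\leftrightarrow D_0A_0'$ and $D_0A_0\leftrightarrow AQ'$); the vertex correspondence and the parallelism of sides then come for free. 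The paper instead works at the level of the six \emph{vertices}: it first identifies them as $A,R,M_d,Q,A_0,D_0$ (which requires proving the nontrivial collinearity of $D_0,R,A_0$ with $R$ the midpoint of $AP$), and then invokes Pappus' theorem on the hexagon $AM_d'RD_0M_dQ'$ to conclude that $N_1$ bisects the remaining vertex pairs. What the paper's longer argument buys is the explicit list of vertices and the collinearity $D_0,R,A_0,K(Q')$ recorded in Corollary~2.6(b); your approach yields the same facts, but only after the fact, by reading off $\sigma(Q')=AP\cdot D_0A_0$ and $\sigma(R')=AQ\cdot D_0Q$. Your treatment of the degenerate cases is also adequate: when $P'=Q$ is infinite, $D_0Q$ is still the line through $D_0$ in the direction of $AP'$, and your line-level argument goes through unchanged.
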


\begin{figure}[htbp]
\[\includegraphics[width=5.5in]{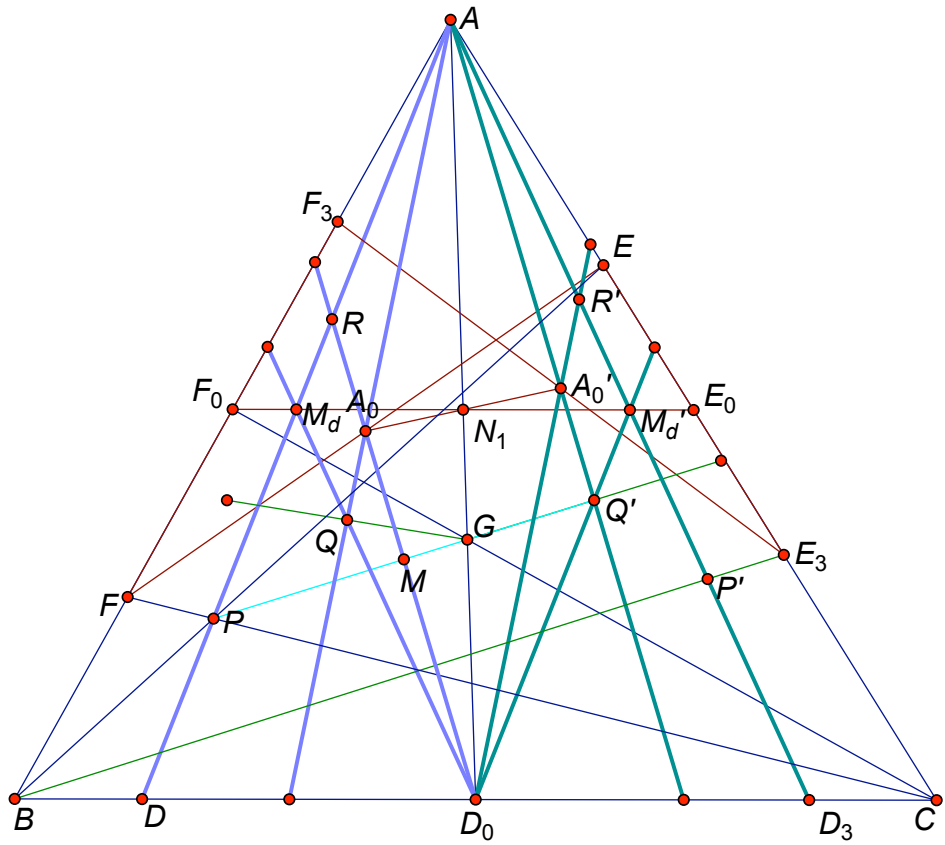}\]
\caption{Quadrilateral Half-turn Theorem}
\label{fig:1.3}
\end{figure}

\begin{proof}
(See Figure 3.) Let $R$ and $R'$ denote the midpoints of segments $AP$ and $AP'$, and $M_d$ and $M_d'$ the midpoints of segments $AD$ and $AD_3$, where $D_3=AP' \cdot BC$.  We first check that the vertices of the quadrilateral
$$\Lambda = (AP)(AQ)(D_0Q)(D_0A_0)$$
are $A, R, M_d, Q, A_0$, and $D_0$.  It is clear that $A, Q, D_0$ are vertices.  Further, $M_d=AP \cdot D_0Q$ by Theorem 2.1 and $A_0=AQ \cdot D_0A_0$ by Theorem 2.4.

We now show that $D_0, A_0$, and $R$ are collinear, from which we obtain $R=AP \cdot D_0A_0$.  Since $A_0E_0A_0'F_0$ joins the midpoints of the sides of the quadrilateral $F_1E_1E_3F_3$, it is a parallelogram, so the intersection of its diagonals is the point $A_0A_0' \cdot E_0F_0=N_1$.  Hence, $N_1$ bisects $A_0A_0'$ (and with $E_0F_0$ also $AD_0$).  Assuming that $P$ is an ordinary point, let $M$ be the midpoint of $PQ'$; then $K(A)=D_0, K(Q')=M$, so $AQ'$ is parallel to to $D_0M$.  Now $R$ and $M$ are midpoints of sides in triangle $AQ'P$, so $RM$ is a line through $M=K(Q')$ parallel to $AQ'$, hence we have the equality of the lines $RM=D_0M=D_0R$.  If $T=A_0'N_1.D_0R$, then triangles $AN_1A_0'$ and $D_0N_1T$ are congruent ($\angle D_0TN_1 \cong \angle AA_0'N_1$ and AAS), so $N_1$ bisects $A_0'T$ and $T=A_0$.  (Note that $N_1$, as the midpoint of $E_0F_0$, lies on $AD_0$, and $A_0$ and $A_0'$ are on opposite sides of this line; hence $N_1$ lies between $A_0$ and $A_0'$.) This shows that $D_0, R$, and $A_0$ are collinear.  By symmetry, $D_0, A_0'$, and $R'$ are collinear whenever $P'$ is ordinary.  If $P'=Q$ is infinite, use the congruence $AN_1A_0 \cong D_0N_1A_0'$ to get that $D_0A_0' || AA_0 =AQ$, which shows that $D_0, A_0'$, and $Q$ are collinear.  Thus, the last vertex of the quadrilateral
$$\Lambda'=(D_0Q')(D_0A_0')(AP')(AQ')$$
is $R'=AP' \cdot D_0A_0' =Q$ in this case.

Now consider the hexagon $AM_d'RD_0M_dQ'$ (if $P$ and $P'$ are ordinary).  Alternating vertices of this hexagon are on the lines $l=AP$ and $m=D_0Q'$, by Corollary 2.2, so the theorem of Pappus implies that intersections of opposite sides, namely,
$$AM_d' \cdot D_0M_d, \ \ AQ' \cdot RD_0, \ \ \textrm{and} \ M_dQ' \cdot M_d'R,$$
are collinear.  The point $AM_d' \cdot D_0M_d=AP' \cdot D_0Q$ is on the line at infinity because $K(AP')=D_0Q$. By the above argument, $AQ' \cdot RD_0$ is also on the line at infinity.  Hence, $M_dQ'$ is parallel to $M_d'R$.  Since $Q'M_d'$ is parallel to $AP=M_dR$ (Theorem 2.1 and its corollary), $M_dQ'M_d'R$ is a parallelogram and the intersection of the diagonals $Q'R \cdot M_dM_d'$ is the midpoint of $M_dM_d'=K(D_1D_3)$ (Corollary 2.2).  But this midpoint is $N_1=K(D_0)$, since $D_0$ is the midpoint of $D_1D_3$.  Hence, $N_1$ also bisects $Q'R$, and by symmetry, $QR'$.

We have shown that $N_1$ bisects the segments between pairs of corresponding vertices in the sets
$$\{A, R, M_d, Q, A_0, D_0\} \ \ \textrm{and} \ \ \{D_0, Q', M_d', R', A_0', A\}.$$  If $P'=Q$ is infinite, we replace $R'$ by $Q$ in the second set of vertices, and we get the same conclusion since $Q$ is then fixed by the half-turn about $N_1$.  This proves the theorem.
\end{proof}

\begin{cor}
a) If $P$ and $P'$ are ordinary, the Euclidean quadrilaterals $RA_0QM_d$ and $Q'A_0'R'M_d'$ are congruent.\\
b) If $P$ is ordinary, the points $D_0, R, A_0$, and $M=K(Q')$ are collinear, where $R$ is the midpoint of segment $AP$.  The point $M=K(Q')$ is the midpoint of segment $D_0R$.\\
c) If $P'$ is infinite, then $Q, M_d, D_0, A_0'$, and $K(A_0)$ are collinear.
\end{cor}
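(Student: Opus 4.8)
The plan is to deduce all three parts directly from the proof of the Quadrilateral Half-turn Theorem (Theorem 2.5), together with Corollary 2.2 and Theorems 2.1 and 2.4; this is essentially a bookkeeping exercise. For part (a): in the proof of Theorem 2.5 it is shown that the Euclidean half-turn $\sigma$ about $N_1$ carries the ordered sextuple of vertices $\{A,R,M_d,Q,A_0,D_0\}$ to $\{D_0,Q',M_d',R',A_0',A\}$ position by position; in particular $\sigma(R)=Q'$, $\sigma(A_0)=A_0'$, $\sigma(Q)=R'$, and $\sigma(M_d)=M_d'$. Hence $\sigma$ maps the quadrilateral $RA_0QM_d$ onto $Q'A_0'R'M_d'$ vertex for vertex, and since $\sigma$ is a Euclidean isometry the two quadrilaterals are congruent. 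So here one only records this correspondence and invokes the fact that a half-turn preserves distances.

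For part (b): the collinearity of $D_0$, $R$, $A_0$ is already established inside the proof of Theorem 2.5 --- the relevant line is the line through $D_0$ parallel to $AQ'$, and there $M=K(Q')$ is introduced as the midpoint of $PQ'$ and shown to lie on this same line. It remains only to see that $M$ bisects $D_0R$. For this I would observe that $RM$ is a midline of triangle $AQ'P$, so the vector from $R$ to $M$ equals $\tfrac12(Q'-A)$, while $D_0M=K(AQ')$ and $K$ is the homothety of ratio $-\tfrac12$ centered at $G$ (since $K(X)G=\tfrac12\,GX$), so the vector from $D_0$ to $M$ equals $-\tfrac12(Q'-A)$. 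Hence $M-R=D_0-M$, i.e.\ $M$ is the midpoint of segment $D_0R$.

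For part (c): if $P'$ is infinite then $Q=K(P')=P'$, because $K$, being a homothety, fixes every point at infinity. By Corollary 2.2 the line $D_0M_d$ equals $D_0Q$ and is parallel to $AP'$, so $M_d$ lies on $D_0Q$; the infinite case treated at the end of the proof of Theorem 2.5 gives that $D_0$, $A_0'$, $Q$ are collinear, so $A_0'$ lies on $D_0Q$; and by Theorem 2.4 the point $A_0$ lies on line $AQ$, so $K(A_0)$ lies on $K(AQ)$, which is the line through $K(A)=D_0$ in the direction $Q=K(Q)$, i.e.\ $K(AQ)=D_0Q$. Thus $Q$, $M_d$, $D_0$, $A_0'$, and $K(A_0)$ all lie on the single line $D_0Q$. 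There is no serious obstacle in any of this; the only things to be careful about are getting the vertex correspondence in (a) in the correct cyclic order (so that the conclusion is a genuine vertex-to-vertex congruence and not a mere equality of point sets), keeping track of the degenerate positions in which some of the named points coincide (e.g.\ $P$ on a median of $ABC$), and using that $K$ fixes points at infinity, which is what makes $Q=P'$ and $K(AQ)=D_0Q$ valid in part (c).
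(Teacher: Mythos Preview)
Your proposal is correct and follows essentially the same route as the paper: all three parts are read off from the data assembled in the proof of Theorem~2.5, together with Corollary~2.2 and Theorem~2.4. The only noticeable difference is in part~(b): the paper observes that the half-turn makes $AQ'D_0R$ a parallelogram, so $|D_0R|=|AQ'|$ and $|D_0M|=\tfrac12|AQ'|$, whereas you compute the two displacement vectors $M-R$ and $M-D_0$ directly and see they are negatives of each other; these are two phrasings of the same fact. Your treatment of part~(c) spells out explicitly what the paper compresses into ``apply $K$ to the collinear points $P'=Q,\,D_3,\,A,\,A_0$ and use the second paragraph of the proof,'' and is exactly right.
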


\begin{proof} Part a) is clear from the proof of the theorem.  For part b), we just have to prove the second assertion.  The theorem implies that quadrilateral $AQ'D_0R$ is a parallelogram, since $AQ'$ is parallel to $D_0A_0=D_0R$, $AR=AP$ is parallel to $D_0Q'$, and $R=AP \cdot D_0A_0$.  Thus, segment $AQ'$ is congruent to segment $D_0R$, and $D_0M=K(AQ')$ is half the length of $AQ' \cong D_0R$.  $M$ is clearly on the same side of line $D_0Q'$ as $P$ and $R$, so $M$ is the midpoint of $D_0R$.  Part c) follows by applying the complement map to the collinear points $P'=Q, D_3, A$, and $A_0$ and appealing to the argument in the second paragraph of the above proof.
\end{proof}
\bigskip

There are statements corresponding to Theorem 2.5 for the quadrilaterals $(BP)(BQ)(E_0Q)(E_0B_0)$ and $(CP)(CQ)(F_0Q)(F_0C_0)$. \medskip

As a second consequence of of Theorem 2.4, we prove the following theorem of Grinberg.  We give a simple proof using the cross-ratio.  For Grinberg's proof see [gr2].

\begin{thm}[{Theorem 8 in [gr1]}]  Suppose $P_1$ and $P_2$ are cyclocevian conjugates with respect to triangle $ABC$.  Then their isotomcomplements $Q_1$ and $Q_2$ are isogonal conjugates with respect to $ABC$.  Equivalently, we have
$$P_2=\phi(P_1)=\iota \circ K^{-1} \circ \gamma \circ K \circ \iota(P_1),$$
where $\iota$ is the isotomic map and $\gamma$ is the isogonal map.
\end{thm}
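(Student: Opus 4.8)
The plan is to reduce the statement about cyclocevian conjugates to a statement about the midpoint configurations introduced in Theorems 2.1 and 2.4, and then to capture the passage from one cevian triangle to the other by a projective cross-ratio computation. Recall that $P_1$ and $P_2$ are cyclocevian conjugates means that the six traces of $P_1$ and $P_2$ on the sidelines of $ABC$ lie on a common conic (a circle), so on each sideline, say $BC$, the two traces $D_1 = AP_1 \cdot BC$ and $D_2 = AP_2 \cdot BC$ together with $B$ and $C$ satisfy the power-of-a-point relation coming from the circle through all six trace points. I would first write this out as an equality of products of signed lengths (Carnot/power-of-a-point), for each of the three sidelines, obtaining three relations that together encode ``cyclocevian.''

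Next I would translate the target conclusion. By Theorem 2.1 (Grinberg--Yiu), the isotomcomplement $Q_i = K \circ \iota(P_i)$ is the common point of the lines $D_0 M_d^{(i)}$, $E_0 M_e^{(i)}$, $F_0 M_f^{(i)}$, where $M_d^{(i)}$ is the midpoint of $A D_1^{(i)}$ and $D_0, E_0, F_0$ are the midpoints of the sides. By Corollary 2.2, the line $D_0 Q_i$ is parallel to $A P_i'$, where $P_i' = \iota(P_i)$. So $Q_1$ and $Q_2$ being isogonal conjugates with respect to $ABC$ is equivalent, via the complement map $K$, to a statement about the lines $A P_1'$ and $A P_2'$ (and the analogous lines at $B$ and $C$) being ``isogonal directions'' relative to the medial triangle $D_0 E_0 F_0$ — more precisely, one shows $Q_1, Q_2$ are isogonal conjugates for $ABC$ iff the isotomic conjugates $P_1', P_2'$ are suitably reflected, and the cleanest encoding of isogonal conjugacy here is again an equality of products of directed distances from the sidelines (the standard ``isogonal conjugate $\iff$ product of the two cevian ratios on each side is $1$ after the appropriate sign/angle bookkeeping''). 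The point is that both ``cyclocevian conjugate'' and ``isotomcomplements are isogonal conjugates'' reduce to multiplicative relations among the six trace abscissae on the three sidelines.

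The heart of the proof is then the cross-ratio step. Fix side $BC$ with its four distinguished points $B, C, D_0$ (midpoint), and the point at infinity $D_\infty$ of $BC$. On $BC$ we have the traces $D_1^{(1)}, D_1^{(2)}$ of $P_1, P_2$ and $D_3^{(1)} = AP_1' \cdot BC$, $D_3^{(2)} = AP_2' \cdot BC$ of their isotomic conjugates; isotomy means $D_1^{(i)}$ and $D_3^{(i)}$ are reflections in $D_0$, i.e. $(B, C; D_1^{(i)}, D_3^{(i)})$ is harmonic with the pair $(D_0, D_\infty)$. I would express the cyclocevian condition as a cross-ratio identity on $BC$ relating $D_1^{(1)}$ and $D_1^{(2)}$, push it through the reflection in $D_0$ to get the corresponding identity for $D_3^{(1)}, D_3^{(2)}$, then project from $A$ to read it as an identity about the lines $AP_1', AP_2'$, and finally apply $K$ (which is an affine map, hence preserves cross-ratio and parallelism) using Corollary 2.2 to land on the statement that $D_0 Q_1$ and $D_0 Q_2$ are isogonal rays at $D_0$ in the medial triangle — and, chasing all three vertices, that $Q_1, Q_2$ are isogonal conjugates for $ABC$. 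The second, ``equivalently'' sentence of the theorem is then immediate: $Q_i = K \circ \iota(P_i)$ and $Q_2 = \gamma(Q_1)$ give $K \circ \iota(P_2) = \gamma \circ K \circ \iota(P_1)$, hence $P_2 = \iota \circ K^{-1} \circ \gamma \circ K \circ \iota(P_1)$, and by definition $P_2 = \phi(P_1)$.

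I expect the main obstacle to be the bookkeeping in the cross-ratio step: making the cyclocevian (circle) condition on $BC$ into a clean cross-ratio identity, and then verifying that after reflection in $D_0$, projection from $A$, and application of $K$, the resulting condition on $D_0 Q_1, D_0 Q_2$ is exactly the isogonal-conjugacy condition for $ABC$ rather than for the medial triangle — i.e. checking that the angle-reflection relative to $D_0 E_0 F_0$ at $D_0$ transports correctly to the angle-reflection at the vertices of $ABC$. A secondary subtlety is handling degenerate or infinite cases (when some $P_i'$ is at infinity, i.e. $P_i$ lies on the Steiner circumellipse, or when a trace coincides with a vertex), which should be dealt with by the same limiting/parallelism arguments already used in the proofs of Theorems 2.1 and 2.5, or simply excluded by the standing hypotheses of the section.
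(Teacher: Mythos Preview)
Your plan has a genuine gap, and it is precisely the one you flag as the ``main obstacle'' but then dismiss. Working through Corollary~2.2, the line $D_0Q_i$ is parallel to $AP_i'$; so any cross-ratio or angle statement you extract about $D_0Q_1$ and $D_0Q_2$ at the vertex $D_0$ of the medial triangle is really a statement about the \emph{directions} $AP_1'$ and $AP_2'$. Since the sides of $D_0E_0F_0$ through $D_0$ are parallel to $AB$ and $AC$, isogonality of $D_0Q_1,D_0Q_2$ at $D_0$ in the medial triangle translates back to isogonality of $AP_1',AP_2'$ at $A$ in $ABC$. In other words, the best your route can deliver is that $P_1'$ and $P_2'$ are isogonal conjugates in $ABC$ (equivalently, via $K\gamma K^{-1}=\gamma_{\text{medial}}$, that $Q_1,Q_2$ are isogonal in the medial triangle). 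That is a different, and in general false, statement from $Q_1,Q_2$ being isogonal in $ABC$; the complement $K$ does not commute with $\gamma$. So the ``transport'' you hope for does not exist, and the argument cannot close.

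The paper's proof avoids this by using Theorem~2.4 rather than Theorem~2.1/Corollary~2.2: the line $AQ_i$ itself passes through the midpoint $A_0^{(i)}$ of the segment $E_iF_i$ (the traces of $P_i$ on $AC$ and $AB$). That pins down $AQ_i$ directly, so the pencil cross-ratio $A(BC,Q_iV)$, with $V$ on the internal bisector at $A$, can be read off on the line $E_iF_i$ as $(F_iE_i,A_0^{(i)}V)=AE_i/AF_i$. The cyclocevian condition then enters exactly as the power of the vertex $A$ with respect to the common circle, $AE_1\cdot AE_2=AF_1\cdot AF_2$, which makes the two cross-ratios reciprocals and forces $AQ_2$ to be the reflection of $AQ_1$ in the bisector. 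Repeating at $B$ and $C$ finishes. If you want to repair your argument, replace the $D_0Q_i\parallel AP_i'$ input by the $AQ_i=AA_0^{(i)}$ input from Theorem~2.4 and run the cross-ratio at $A$ on the line $E_iF_i$ rather than on $BC$; the power-of-a-point relation you need is then the one at the vertex $A$, not a relation along $BC$.
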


\begin{proof}
Let $P_1$ and $P_2$ be cyclocevian conjugates in triangle $ABC$, $D, E, F$ the traces of $P_1$, and $D',E',F'$ the traces of $P_2$, on sides $BC, AC, AB$, respectively, so that all six traces lie on a circle (see [y1] and [y3]).  Also, on side $EF$ of triangle $DEF$ let $V$ be the trace of the angle bisector of $\angle BAC = \angle FAE$.  On the one hand, the cross-ratio of the lines $AB, AC, AQ_1$, and $AV$ is given by
$$A(BC,Q_1V)=\frac{\textrm{sin} BAQ_1}{\textrm{sin}Q_1AC} \div \frac{\textrm{sin}BAV}{\textrm{sin}VAC}=\frac{\textrm{sin}BAQ_1}{\textrm{sin}Q_1AC}.$$
Next, consider the isogonal conjugate $\gamma(Q_1)=Q_1^\gamma$.  Since $\angle BAQ_1^\gamma \cong \angle CAQ_1$ and $\angle CAQ_1^\gamma \cong \angle BAQ_1$, we have
$$A(BC, Q_1^\gamma V)= \frac{\textrm{sin}BAQ_1^\gamma}{\textrm{sin}Q_1^\gamma AC}=\frac{\textrm{sin}Q_1AC}{\textrm{sin}BAQ_1}=\frac{1}{A(BC,Q_1V)}.$$
On the other hand, by Theorem 2.4 and the fact that $A_0$ is the midpoint of  $EF$ we have
$$A(BC,Q_1V) = (FE, A_0V)=\frac{FA_0}{A_0E} \div \frac{FV}{VE} = \frac{AE}{AF},$$
since $\displaystyle \frac{FV}{VE}=\frac{AF}{AE}$ in triangle $AFE$.  In the same way, we have
$$A(BC,Q_2V) = \frac{AE'}{AF'}=\frac{AF}{AE}=\frac{1}{A(BC,Q_1V)}=A(BC,Q_1^\gamma V);$$
the second equality holding because $E, E', F, F'$ lie on a circle, so that the products $AE' \cdot AE = AF' \cdot AF$ are equal.  This implies that $AQ_2$ is precisely the reflection of $AQ_1$ across the angle bisector, i.e., $AQ_2=AQ_1^\gamma$.  Applying the same argument to the vertices $B$ and $C$, we see that $Q_2$ is the isogonal conjugate of $Q_1$.
\end{proof}
\bigskip

\end{section}

\begin{section}{Cevian triangles and affine maps.}
In this section we consider the affine transformation $T_P$ which maps the triangle $ABC$ to the cevian triangle $DEF$ of point $P$, so that $T_P(A)=D$, $T_P(B)=E$, $T_P(C)=F$. We also consider some important points related to the mapping $T_P$ on the sides of $DEF$. We first give a basic lemma in order to prove geometrically that the fixed point of the affine transformation $T_P$ is $Q$.

\begin{figure}
\[\includegraphics[width=3.25in]{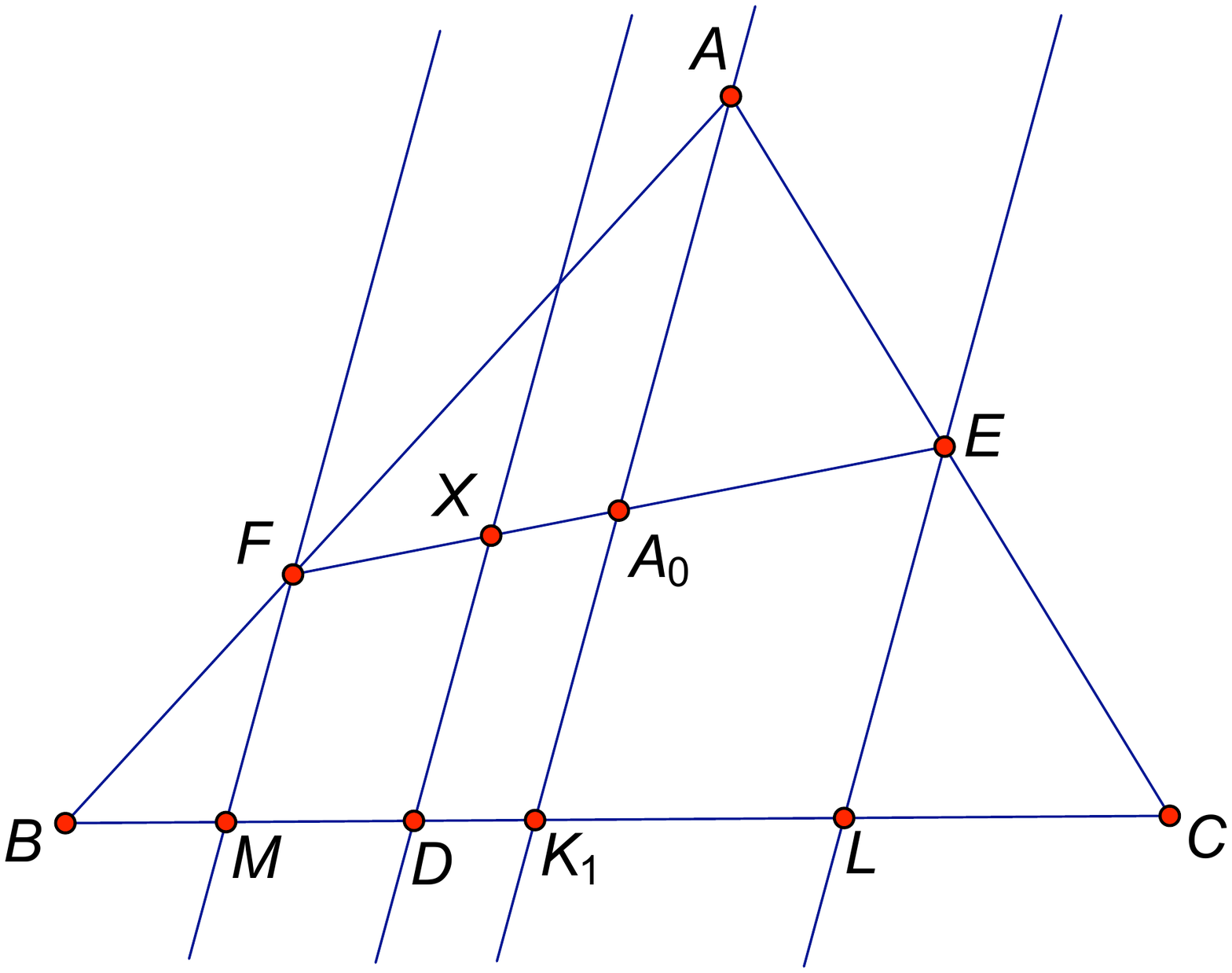}\]
\caption{Lemma \ref{lem:2.1}}
\label{fig:2.1}
\end{figure}

\begin{lem}
Let $X$ be on $EF$ such that the signed distances satisfy $\dfrac{FX}{XE} = \dfrac{BD}{DC}$.
Then $DX \pa AA_0$.
\label{lem:2.1}
\end{lem}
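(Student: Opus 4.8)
The plan is to reduce the asserted parallelism to a single application of Ceva's theorem by computing the two relevant direction vectors in an affine frame based at $A$; this makes it transparent that Lemma~\ref{lem:2.1} is just Ceva's relation for the concurrent cevians $AD$, $BE$, $CF$ in another guise. Concretely, take $A$ as the origin of vectors. Since $F$ lies on $AB$ and $E$ on $AC$, write $F=f\,\overrightarrow{AB}$ and $E=e\,\overrightarrow{AC}$, where $f=AF/AB$ and $e=AE/AC$ are the signed ratios, and write $D=(1-d)\,\overrightarrow{AB}+d\,\overrightarrow{AC}$ with $d=BD/BC$, so that $BD/DC=d/(1-d)$. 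Because $AD$, $BE$, $CF$ concur at $P$, the directed form of Ceva's theorem gives
\[
\frac{AF}{FB}\cdot\frac{BD}{DC}\cdot\frac{CE}{EA}=1,\qquad\text{that is,}\qquad f\,d\,(1-e)=(1-f)(1-d)\,e .
\]
Now $A_0$ is the midpoint of $EF$ (Theorem~\ref{thm:1.3}), so $\overrightarrow{AA_0}=\tfrac{1}{2}(f\,\overrightarrow{AB}+e\,\overrightarrow{AC})$; and since $X$ lies on $EF$ with $FX/XE=BD/DC=d/(1-d)$ we have $X=(1-d)F+dE$, hence
\[
\overrightarrow{DX}=X-D=-(1-d)(1-f)\,\overrightarrow{AB}-d\,(1-e)\,\overrightarrow{AC}.
\]
The vectors $\overrightarrow{DX}$ and $\overrightarrow{AA_0}$ are proportional exactly when $e\,(1-d)(1-f)=f\,d\,(1-e)$, which is precisely the Ceva relation displayed above; therefore $DX\pa AA_0$. (Note that $\overrightarrow{AA_0}\ne 0$, since it vanishes only if $E=F=A$, which the standing hypotheses on $P$ exclude, so the proportionality really does express parallelism of the two lines and no separate degenerate cases intervene.)

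Alongside this computation I would record a more synthetic reading. Let $A^{*}$ be the fourth vertex of the parallelogram on $A$, $F$, $E$, so that $AFA^{*}E$ is a parallelogram; its diagonals $AA^{*}$ and $FE$ bisect each other, and since $A_0$ is the midpoint of $FE$ it is also the midpoint of $AA^{*}$. Thus $\overline{AA_0}=\overline{AA^{*}}$, and the assertion becomes $DX\pa AA^{*}$, which can also be reached by a Menelaus argument that again collapses to the Ceva relation. Moreover, by Theorem~\ref{thm:1.3} the line $\overline{AA_0}$ is the line $\overline{AQ}$ to the isotomcomplement, so the lemma may be read as $DX\pa AQ$ — the shape in which it is used below to locate the fixed point of $T_P$.

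There is no genuine obstacle here, since the whole content is Ceva's theorem; the one thing to watch is the sign bookkeeping — one must use the $+1$ (directed) form of Ceva and keep the three ratios consistently oriented, because what is needed is the exact equality of the two displayed expressions for $\overrightarrow{DX}$ and $\overrightarrow{AA_0}$, not merely their equality up to sign.
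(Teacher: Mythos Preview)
Your argument is correct. Setting $A$ as the origin and writing $F=f\,\overrightarrow{AB}$, $E=e\,\overrightarrow{AC}$, $D=(1-d)\overrightarrow{AB}+d\,\overrightarrow{AC}$, your computation of $\overrightarrow{DX}$ and $\overrightarrow{AA_0}$ checks out, and the proportionality condition $e(1-d)(1-f)=fd(1-e)$ is precisely the directed Ceva relation for the concurrent cevians $AD,BE,CF$. The sign bookkeeping is handled cleanly, and the observation that $\overrightarrow{AA_0}\ne 0$ rules out the only degeneracy.

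The paper, however, argues purely synthetically, in keeping with its stated aim of giving coordinate-free proofs. It drops parallels to $AA_0$ through $E$, $F$, and $D$, meeting $BC$ at $L$, $M$, and meeting $EF$ at $Y$ respectively; the parallel-line ratio lemma converts $\dfrac{CE}{EA}$, $\dfrac{AF}{FB}$, $\dfrac{FY}{YE}$ into ratios along $BC$, and Ceva's theorem together with the fact that $K_1=AA_0\cdot BC$ is the midpoint of $LM$ forces $\dfrac{FY}{YE}=\dfrac{BD}{DC}$, whence $Y=X$. Both proofs are at bottom Ceva's theorem, but your route is shorter and makes the equivalence with Ceva transparent at a glance, while the paper's route stays within the synthetic idiom and avoids choosing a frame. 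Your parallelogram remark (that $A_0$ is the midpoint of $AA^{*}$ where $AFA^{*}E$ is a parallelogram) is a nice observation; if you wanted to convert your proof into a fully synthetic one for this paper, that is the natural starting point, though you would still need to supply the Menelaus/Ceva chase you allude to rather than leave it as a sketch.
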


\begin{proof} (See Figure 4.)  Draw lines through $E$ and $F$ parallel to $AA_0$, and let them intersect $BC$ at $L$ and $M$, respectively, and let $AA_0$ intersect $BC$ at $K_1$. Draw a line through $D$ parallel to $AA_0$ and let it intersect $EF$ at $Y$. We must show that $X = Y$. The parallel lines give us the equalities
\[\frac{CE}{EA} = \frac{CL}{LK_1}, \qquad \frac{AF}{FB} = \frac{K_1M}{MB}, \qquad \frac{FY}{YE} = \frac{MD}{DL}.\]
By Ceva's theorem, $1 = \dfrac{AF}{FB}\dfrac{BD}{DC}\dfrac{CE}{EA} = \dfrac{K_1M}{MB}\dfrac{BD}{DC}\dfrac{CL}{LK_1}$. \\ \\

\noindent Since $A_0$ is the midpoint of $EF$, $K_1$ is the midpoint of $LM$, so $LK_1 = K_1M$ implies that
\[1 = \frac{CL}{MB}\frac{BD}{DC},\text{ so } \frac{BM}{LC} = \frac{MB}{CL} = \frac{BD}{DC} = \frac{BM + MD}{DL + LC} = \frac{BM + MD}{LC + DL}.\]
This last equality implies that 
\[\frac{BM}{LC} = \frac{MD}{DL} = \frac{FY}{YE}\text{, i.e. } \frac{FX}{XE} = \frac{BD}{DC} = \frac{BM}{LC} = \frac{FY}{YE}.\]

\noindent But there is exactly one point $X$ on $EF$ such that the signed ratio $\dfrac{FX}{XE}$ equals $\dfrac{BD}{DC}$, so $X = Y$.
\end{proof}

\begin{thm}[Ehrmann] If $T_P$ is the unique affine mapping which takes $ABC$ to $DEF$, then $T_P(Q) = Q$.  (This holds even when the point $P$ lies on $l_\infty$.)
\label{thm:2.2}
\end{thm}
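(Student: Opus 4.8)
The plan is to prove that $T_P$ maps the line $D_0 Q$ onto the line $A A_0$. Once this is known, the symmetric statement at the vertex $B$ gives that $T_P$ maps $E_0 Q$ onto $B B_0$; since $Q$ lies on both $D_0 Q$ and $E_0 Q$, the image $T_P(Q)$ then lies on both $A A_0$ and $B B_0$, and these two lines meet only in $Q$ by Theorem~\ref{thm:1.3}. Hence $T_P(Q) = Q$.

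Two preliminary observations set this up. First, $D_0$ is the midpoint of $BC$ and $A_0$ is the midpoint of $EF = T_P(B)\,T_P(C)$; since affine maps preserve midpoints, $T_P(D_0) = A_0$, so in particular $A_0$ lies on the line $T_P(D_0 Q)$. Second, by Corollary~\ref{cor:1.1} the line $D_0 Q$ is parallel to $AP'$, and since the trace $D_3$ of $P' = \iota(P)$ on $BC$ lies on $AP'$, this says $D_0 Q \parallel A D_3$.

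The core of the argument is to compute $T_P(A D_3)$ via Lemma~\ref{lem:2.1}. The point $D_3$ is the reflection of $D$ in the midpoint $D_0$ of $BC$, so $BD_3/D_3 C = DC/BD$; as $T_P$ sends $B \mapsto E$, $C \mapsto F$ and preserves signed ratios along a line, the point $Y = T_P(D_3)$ on $EF$ satisfies $EY/YF = DC/BD$. A short sign check shows this is exactly the defining condition $FX/XE = BD/DC$ for the point $X$ of Lemma~\ref{lem:2.1}, so $T_P(D_3) = X$. Combined with $T_P(A) = D$, this gives $T_P(A D_3) = DX$, and Lemma~\ref{lem:2.1} yields $DX \parallel A A_0$. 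Finally, $T_P$ is affine, so from $D_0 Q \parallel A D_3$ we get $T_P(D_0 Q) \parallel T_P(A D_3) = DX \parallel A A_0$; as $T_P(D_0 Q)$ passes through $T_P(D_0) = A_0 \in A A_0$, it must coincide with $A A_0$. Therefore $T_P(Q) \in A A_0$, and running the same argument at the vertex $B$ (with $E_0$, $B_0$, and the trace $E_3$ of $P'$ on $CA$ in place of $D_0$, $A_0$, $D_3$) gives $T_P(Q) \in B B_0$, whence $T_P(Q) = Q$ by Theorem~\ref{thm:1.3}.

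I expect the generic argument above to go through smoothly and the main obstacle to be the degenerate configurations. One must ensure that $D_0 Q$, $A D_3$, $A A_0$, $B B_0$ are genuine lines and that $A A_0 \neq B B_0$, so that their common point is precisely $Q$; this can fail only in special positions of $P$ (for instance $P$ on a median of $ABC$), which must be handled separately, though one can also note that $T_P(Q)=Q$ is a closed condition in $P$ and so it suffices to verify it on a dense set. The case $P \in \li$ (so that $P'$ is ordinary) needs a short check that Corollary~\ref{cor:1.1} and Theorem~\ref{thm:1.3}, both established above for infinite $P$, still feed the argument; alternatively it follows from the generic case by continuity. Some care with signs is also needed precisely at the step identifying $T_P(D_3)$ with the point $X$ of Lemma~\ref{lem:2.1}, since that identification carries the whole proof.
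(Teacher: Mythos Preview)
Your proof is correct and follows essentially the same route as the paper: both arguments hinge on checking that $T_P(D_3)$ satisfies the ratio condition of Lemma~\ref{lem:2.1}, so that $D\,T_P(D_3)\parallel AA_0$, and then conclude $T_P(Q)\in AA_0$. The only cosmetic difference is in this last step: the paper writes $T_P(Q)=K'(T_P(P'))$ using the complement map $K'$ of $DEF$ and a similar-triangles argument with the centroid $G'=T_P(G)$, while you instead use $T_P(D_0)=A_0$ together with $D_0Q\parallel AD_3$ (Corollary~\ref{cor:1.1}) to identify the line $T_P(D_0Q)$ with $AA_0$ directly.
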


\begin{proof} (See Figure 5.)  We show that $AA_0$ passes through $T_P(Q)$. It will follow similarly that $BB_0$ and $CC_0$ also pass through $T_P(Q)$. This implies the result because these lines intersect at $Q$.

First, if $K$ is the complement map with respect to triangle $ABC$ and $K'$ the complement map with respect to triangle $DEF$, then $T_P \circ K = K' \circ T_P$. This is because $T_P$ preserves ratios; so if $Y_1 = K(Y)$, then $Y_1$ is collinear with $G$ and $Y$, and $YG = 2\cdot GY_1$ implies $T_P(Y)T_P(G) = 2 \cdot T_P(G)T_P(Y_1)$; hence $T_P(Y_1) = K'(T_P(Y))$, since $G'=T_P(G)$ is the centroid of $DEF$.

\begin{figure}
\[\includegraphics[width=3.5in]{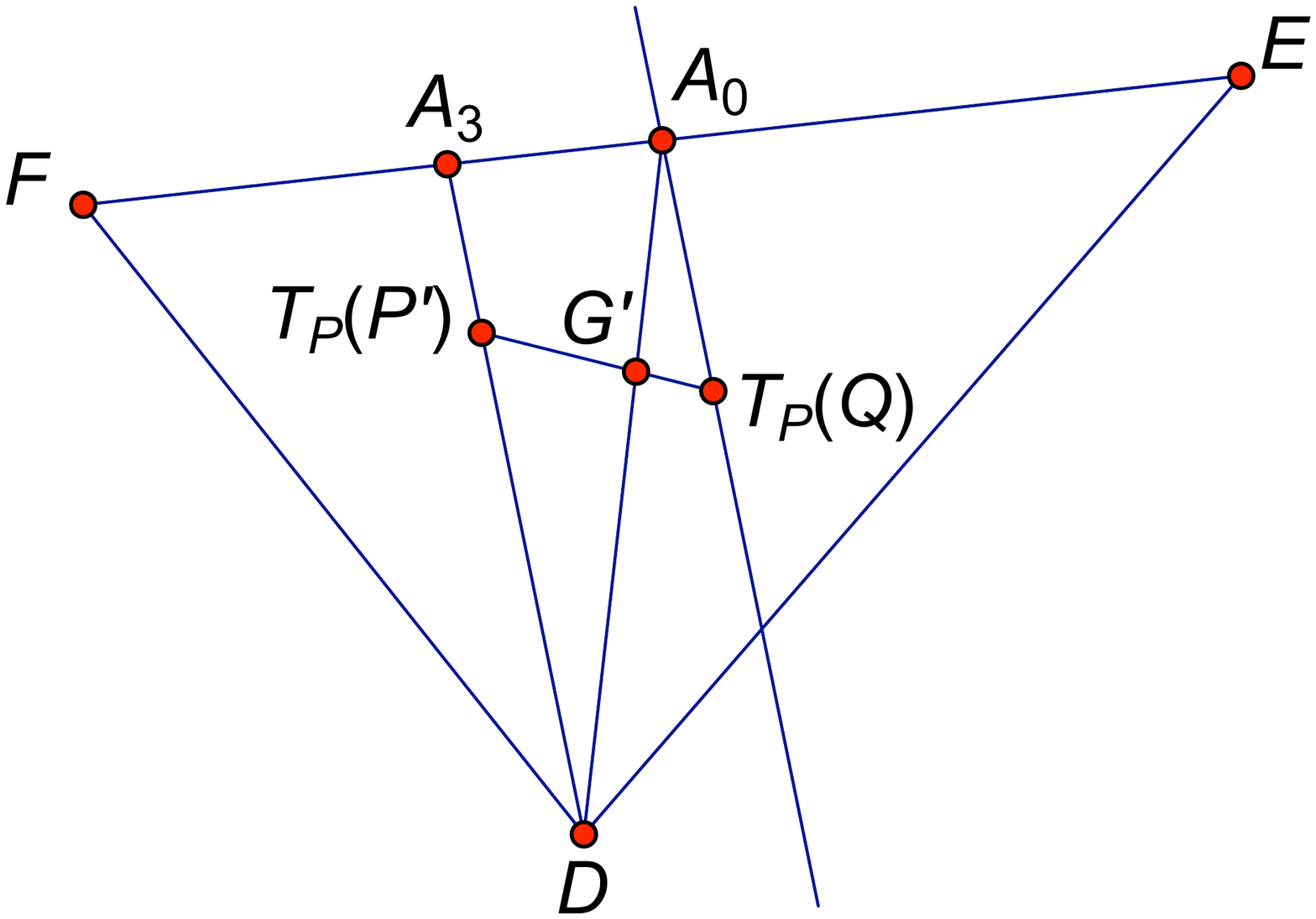}\]
\caption{$T_P(Q) = Q$}
\label{fig:2.2}
\end{figure}
Now, $T_P(Q) = T_P(K(P')) = K'(T_P(P'))$, so we really need to prove that $AA_0$ passes through the complement, in triangle $DEF$, of $T_P(P')$. Since $P'$ lies on $AD_3$, $T_P(P')$ lies on $T_P(A)T_P(D_3) = DA_3$ and
\[\frac{BD}{DC} = \frac{D_3C}{BD_3} = \frac{A_3F}{EA_3} = \frac{FA_3}{A_3E}.\]
Lemma \ref{lem:2.1} now gives that $DA_3 \pa AA_0$. If $P' = Q$ is an infinite point, then this implies that $T_P(P') = Q$, and so $T_P(Q)=K'(Q)=Q$ is fixed. If $P'$ and $Q$ are ordinary, then letting $G' = T_P(G)$ and $I = T_P(P')G'\cdot AA_0$ we have that $\Delta T_P(P')DG' \sim \Delta IA_0G'$.  Since $G'$ is the centroid of triangle $DEF$, we have $DG'=2 \cdot G'A_0$, so also $T_P(P')G' = 2 \cdot G'I$, which means that $I = T_P(Q)$ is the complement of $T_P(P')$ and $AA_0$ lies on the point $T_P(Q)$.
\end{proof}

\begin{cor}\label{cor:2.2}
The point $Q$ is the complement of $T_P(P')$ with respect to the triangle $DEF$.
\end{cor}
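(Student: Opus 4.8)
The plan is to read this off directly from the proof of Theorem~\ref{thm:2.2}. Two ingredients established there are all that is needed. First, the intertwining relation $T_P \circ K = K' \circ T_P$, where $K'$ denotes the complement map with respect to triangle $DEF$; this was verified using only that $T_P$ is affine, hence preserves ratios of collinear points, together with the fact that $T_P(G)$ is the centroid of $DEF$. Second, the conclusion $T_P(Q) = Q$ of the theorem itself.

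First I would apply the intertwining relation to the point $P'$. Since $Q = K(P')$ by the definition of the isotomcomplement, this yields
\[ T_P(Q) \;=\; T_P\bigl(K(P')\bigr) \;=\; K'\bigl(T_P(P')\bigr). \]
Combining this with $T_P(Q) = Q$ from Theorem~\ref{thm:2.2} gives $Q = K'\bigl(T_P(P')\bigr)$, which is precisely the statement that $Q$ is the complement of $T_P(P')$ with respect to triangle $DEF$.

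There is no real obstacle here: the corollary is a direct repackaging of the computation carried out inside the proof of Theorem~\ref{thm:2.2}, so the only point to watch is that it covers the same range of cases. When $P$ is an ordinary point the conclusion is immediate from the two facts recalled above; and when $P' = Q$ is an infinite point, the displayed identity still holds, since $T_P(P') = Q$ in that case (as observed in the proof of Theorem~\ref{thm:2.2}) and $K'$, being a homothety, fixes every point at infinity, so $K'(Q) = Q = T_P(Q)$ once again.
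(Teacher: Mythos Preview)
Your proof is correct and is exactly the argument the paper intends: the corollary is stated without proof because the identity $T_P(Q)=T_P(K(P'))=K'(T_P(P'))$ is displayed explicitly in the proof of Theorem~\ref{thm:2.2}, and combining it with the theorem's conclusion $T_P(Q)=Q$ gives the result immediately. Your treatment of the infinite case is also in line with the paper's handling.
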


\begin{lem}
Let $G$ be the centroid of $ABC$; $E$ and $F$ points on $AC$ and $AB$, respectively, distinct from $A$, $B$ and $C$; and $AG \cdot EF = A^*$. Then
\[\frac{EA^*}{A^*F} = \frac{AE}{AF} \cdot \frac{AB}{AC}.\]
\label{lem:2.3}
\end{lem}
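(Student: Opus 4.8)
\textit{Proof proposal.} The plan is to compute directly the ratio in which the median cuts the segment $EF$. The cleanest tool is the ratio lemma (the trigonometric form of Ceva's theorem for a single cevian): if $W$ lies on line $YZ$ in a triangle $XYZ$, then $\dfrac{YW}{WZ}=\dfrac{XY}{XZ}\cdot\dfrac{\sin\angle YXW}{\sin\angle WXZ}$, with the usual directed conventions (this follows at once by comparing the two area expressions for triangles $XYW$ and $XWZ$). Write $D_0$ for the midpoint of $BC$, so that the median $AG$ is the line $AD_0$ and $A^{*}=AD_0\cdot EF$.

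First I would apply the ratio lemma to the cevian $AA^{*}$ of triangle $AEF$, obtaining
$$\frac{EA^{*}}{A^{*}F}=\frac{AE}{AF}\cdot\frac{\sin\angle EAA^{*}}{\sin\angle A^{*}AF}.$$
Next I would apply the ratio lemma to the median $AD_0$ of triangle $ABC$: since $BD_0=D_0C$,
$$1=\frac{BD_0}{D_0C}=\frac{AB}{AC}\cdot\frac{\sin\angle BAD_0}{\sin\angle D_0AC},\qquad\text{hence}\qquad \frac{\sin\angle D_0AC}{\sin\angle BAD_0}=\frac{AB}{AC}.$$
Because $E$ lies on line $AC$, $F$ on line $AB$, and $A^{*}$ on line $AD_0$, the lines $AE,AF,AA^{*}$ are the lines $AC,AB,AD_0$, so $\angle EAA^{*}=\angle CAD_0$ and $\angle A^{*}AF=\angle D_0AB$; substituting the second display into the first gives $\dfrac{EA^{*}}{A^{*}F}=\dfrac{AE}{AF}\cdot\dfrac{AB}{AC}$, which is the assertion.

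Alternatively — and in the spirit of Lemma \ref{lem:2.1}, whose proof uses exactly this device — one can argue with parallels only: draw through $E$ and through $F$ the lines parallel to $AD_0$, meeting $BC$ at $L$ and $M$ respectively. The intercept theorem applied to the two transversals $EF$ and $BC$ of this pencil gives $\dfrac{EA^{*}}{A^{*}F}=\dfrac{LD_0}{D_0M}$ (using that $A^{*}$ is the point of line $AD_0$ on $EF$), while the intercept theorem applied along $AC$ and along $AB$ gives $\dfrac{D_0L}{LC}=\dfrac{AE}{EC}$ and $\dfrac{D_0M}{MB}=\dfrac{AF}{FB}$. Feeding in $BD_0=D_0C$ and simplifying produces the same ratio. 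A third, essentially bookkeeping-free option is a short vector computation with $A$ as origin: writing $E=e\,C$ and $F=f\,B$ along the lines $AC$ and $AB$ (so $e=AE/AC$, $f=AF/AB$) and intersecting line $EF$ with the line $\mathbb{R}(B+C)=AG$ gives $A^{*}=\frac{ef}{e+f}(B+C)$, whence $\vec{EA^{*}}=\frac{e}{e+f}\vec{EF}$ and $\dfrac{EA^{*}}{A^{*}F}=\dfrac{e}{f}=\dfrac{AE/AC}{AF/AB}$.

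The identity itself is entirely elementary, so there is no deep obstacle. The one point that must be handled with care is the signs, because $E$ and $F$ are allowed to lie on the extensions of the sides and $A^{*}$ need not lie between $E$ and $F$; this is taken care of by working throughout with directed ratios (and, in the trigonometric version, directed angles), following the directed-distance conventions of [ev]. The vector derivation makes this automatic and is probably the safest to record.
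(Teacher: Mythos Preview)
Your argument is correct, and each of the three variants you sketch goes through without trouble; the directed-ratio caveat you flag is the only subtlety, and you handle it.

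The paper's own proof is different in flavor though closely related in spirit. It introduces the \emph{angle bisector} from $A$ as an auxiliary fourth line, letting $V$ and $V'$ be its traces on $EF$ and $BC$, and then uses invariance of the cross-ratio under projection from $A$:
\[
\frac{EA^*}{A^*F}\div\frac{AE}{AF}=(EF,A^*V)\ \stackrel{A}{=}\ (CB,D_0V')=\frac{BV'}{V'C}=\frac{AB}{AC},
\]
the outer equalities coming from the angle-bisector theorem applied in $AEF$ and in $ABC$, and the middle ratio $CD_0/D_0B=1$ disappearing because $D_0$ is the midpoint. So where you eliminate the common sine ratio by applying the ratio lemma twice, the paper eliminates it implicitly by choosing the bisector as the fourth element of a cross-ratio. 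Your approach is a bit more direct and needs no auxiliary line; the paper's has the advantage of staying within the cross-ratio language used elsewhere (e.g.\ in the proof of Theorem~2.7), which keeps the toolkit uniform. Your vector computation is the shortest of all and makes the sign conventions automatic, as you note.
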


\begin{proof} Let $V$ be the trace on segment $EF$ of the angle bisector of $\angle BAC = \angle FAE$, and let $V'$ be its trace on $BC$. If $(EF,A^*V)$ denotes the cross-ratio of these four points, we have
\[\frac{EA^*}{A^*F} \div \frac{AE}{AF} = (EF, A^*V) \stackrel{A}{=} (CB, D_0V') = \frac{BV'}{V'C} = \frac{AB}{AC}.\]
\end{proof}

In order to prove the next theorem we will make use of the following involution on the line $BC$. (There are similar involutions for $AB$ and $AC$). Let $\mu$ be the perspectivity taking a point on $EF$ to a point on $BC$ by projection from $A$. We define $\pi = \mu \circ T_P$. Since $T_P$ maps $BC$ to $EF$, $\pi$ maps $BC$ to itself. Thus, if $Y$ is a point on $BC$,
\[\pi(Y) = \mu(T_P(Y)) = AT_P(Y)\cdot BC.\]
Since $\pi(B) = C$ and $\pi(C) = B$, $\pi$ interchanges two points and is therefore an involution on $BC$. The significance of this mapping is that if a point $Y$ on $BC$ maps to $T_P(Y)$ on $EF$, then $T_P$ maps the intersection $AT_P(Y)\cdot BC = Y'$ back to $AY\cdot EF = T_P(Y')$.  \bigskip

Now recall the definition of the points
\[D_0 = AG\cdot BC, \qquad D_1 = D = AP\cdot BC, \qquad D_2 = AQ\cdot BC,\]
\[D_3 = AP'\cdot BC, \qquad D_4 = AQ' \cdot BC, \qquad D_5 = AX\cdot BC.\]
\noindent Here $G$ is the centroid of $\Delta ABC$, $Q$ is the isotomcomplement of $P$, and $Q'$ is the isotomcomplement of $P'$.  Also, $X$ is defined to be the intersection of the cevians $AA_3, BB_3$, and $CC_3$, where 
\[A_3 = T_P(D_3), B_3 = T_P(E_3), C_3 = T_P(F_3).\] \smallskip
This intersection exists because $A_3B_3C_3 = T_P(D_3E_3F_3)$ is the cevian triangle for $T_P(P')$ with respect to triangle $DEF = T_P(ABC)$, and is therefore perspective to $ABC$, by the cevian nest theorem [ac, p. 165, Supp. Ex. 7].  Furthermore, $A_j=T_P(D_j)$ for $0 \le j \le 5$.

\begin{figure}
\[\includegraphics[width=4in]{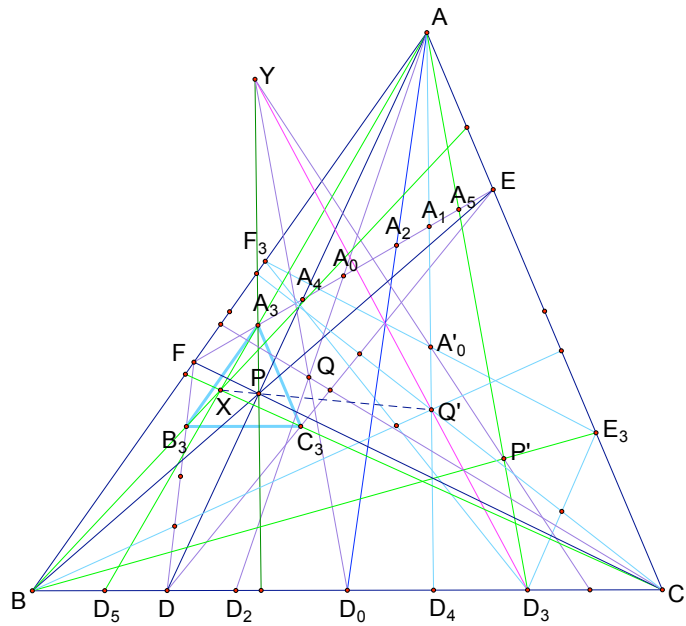}\]
\caption{The points $A_i$ and $D_i$, $i=0, 1, 2, 3, 4, 5$.}
\label{fig:2.3}
\end{figure}

\begin{thm}[Collinearity Theorem] The following sets of 4 points are collinear: $AA_0QD_2, AA_1Q'D_4, AA_2GD_0, AA_3XD_5, AA_4PD_1, AA_5P'D_3$. (Similar statements hold for the other vertices $B$ and $C$. See Figure \ref{fig:2.3} below.  This theorem also holds when $P$ or $P'$ is infinite.)
\label{thm:2.4}
\end{thm}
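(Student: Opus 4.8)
By symmetry it suffices to establish the six collinearities through the vertex $A$; the ones through $B$ and $C$ then follow by interchanging the roles of the vertices. Two of these are essentially immediate: since $A_0=T_P(D_0)$ is the midpoint of $EF$, Theorem 2.4 says $A$, $A_0$, $Q$ are collinear, and $D_2=AQ\cdot BC$ lies on that line (the assertion about $AA_0QD_2$); and $X$ was defined, just before the statement, as $AA_3\cdot BB_3\cdot CC_3$, so $A$, $A_3$, $X$ are collinear and $D_5=AX\cdot BC$ lies on that line (the assertion about $AA_3XD_5$).

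The remaining four lines I would get from the involution $\pi$ on $BC$. Recall $\pi(Y)=A\,T_P(Y)\cdot BC$ for $Y$ on $BC$, so (by the remark after the definition of $\pi$, or directly since $\pi$ is an involution) $\pi(\pi(Y))=Y$. Reading $AA_0QD_2$ backwards gives $\pi(D_0)=AA_0\cdot BC=AQ\cdot BC=D_2$, hence $\pi(D_2)=D_0$, so $A_2=T_P(D_2)$ lies on $AD_0=AG$; with $D_0=AG\cdot BC$ this is the assertion about $AA_2GD_0$. Similarly $AA_3XD_5$ gives $\pi(D_3)=D_5$, hence $\pi(D_5)=D_3$, so $A_5=T_P(D_5)$ lies on $AD_3=AP'$ (the assertion about $AA_5P'D_3$). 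Finally the same device shows the two leftover assertions are equivalent: $AA_4PD_1$ holds iff $A_4=T_P(D_4)$ lies on $AD_1=AP$, iff $\pi(D_4)=D_1$, iff $\pi(D_1)=D_4$, iff $A_1=T_P(D_1)$ lies on $AD_4=AQ'$, which is the assertion about $AA_1Q'D_4$.

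So the whole theorem reduces to one statement: $A_1=T_P(D_1)=T_P(D)$ lies on $AQ'$. This is the crux, and I would settle it by a ratio computation. Since $T_P$ carries $B\mapsto E$, $C\mapsto F$ and preserves ratios, $A_1$ is the point of $EF$ with $FA_1/A_1E=DC/BD$. Running the cross-ratio argument from the proof of Lemma 3.4 with a general point of $EF$ in place of $A^*$ shows that $AA_1\cdot BC$ divides $BC$ in the directed ratio $\frac{DC}{BD}\cdot\frac{AE}{AF}\cdot\frac{AB}{AC}$. On the other side, Theorem 2.4 applied to $P'$ puts the midpoint of $E_3F_3$ on the line $AQ'$, so Lemma 3.4 in triangle $AE_3F_3$ (with $B$ on line $AF_3$, $C$ on line $AE_3$) shows $D_4=AQ'\cdot BC$ divides $BC$ in the ratio $\frac{AB}{AC}\cdot\frac{AE_3}{AF_3}$. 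Ceva's theorem for $P$ reduces $\frac{DC}{BD}\cdot\frac{AE}{AF}$ to $\frac{EC}{BF}$, while $AE_3=EC$ and $AF_3=BF$ since $E_3,F_3$ are the isotomic points of $E,F$; so the two ratios coincide, $AA_1\cdot BC=D_4$, and $A_1$ lies on $AD_4=AQ'$. I expect this ratio matching — producing the right companion of Lemma 3.4 for an arbitrary point of $EF$, and tracking the signs through Ceva and the isotomic relations — to be the one real obstacle; everything else is formal once $\pi$ is in hand.

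Finally, none of Theorem 2.4, the definition of $X$, or the reciprocity of $\pi$ needs $P$ or $P'$ to be finite, and the ratio argument above specializes without trouble when $P$ or $P'$ is at infinity (the point at infinity of $BC$ then standing in for the relevant trace); so the theorem persists in those cases too, as asserted parenthetically.
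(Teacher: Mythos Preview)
Your proof is correct and follows essentially the same route as the paper's: both handle $AA_0QD_2$ via Theorem~2.4 and $AA_3XD_5$ by the definition of $X$, use the involution $\pi$ on $BC$ to deduce $AA_2GD_0$ and $AA_5P'D_3$ and to reduce the remaining pair to the single claim that $A_1=T_P(D)$ lies on $AQ'$, and then settle that crux by a ratio computation combining Lemma~3.4 (applied once for $P$ and once for $P'$), Ceva's theorem, and the isotomic relations $AE_3=CE$, $AF_3=BF$. The only difference is organizational: the paper temporarily sets $A_1=AQ'\cdot EF$ and computes $EA_1/A_1F$ via the cross-ratio $(FE,A_0A_1)\stackrel{A}{=}(BC,D_2D_4)$, whereas you keep $A_1=T_P(D)$, project it down to $BC$ by the same cross-ratio device, and match with $D_4$ obtained from Lemma~3.4 in triangle $AE_3F_3$---the same ingredients, slightly rearranged.
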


\begin{proof} (Se Figure 6.)  The collinearity of the four points $AA_3XD_5$ is immediate from the definition of the point $X$. Hence $\pi(D_3) = D_5$. Now $\pi(D_5) = D_3$ implies that $AP' \cdot EF = T_P(D_5) = A_5$ and so $AA_5P'D_3$ is a collinear set.\\
\\
The collinearity $AA_0QD_2$ is immediate from Theorem \ref{thm:1.3}, where we note that $T_P(D_0) = A_0$ since $T_P$ preserves ratios along lines. Hence $\pi(D_0) = D_2$, which implies that $\pi(D_2) = D_0$, so $AA_2GD_0$ is also a collinear set, where $A_2 = T_P(D_2)$.\\
\\
It remains to prove that the sets $AA_1Q'D_4$ and $AA_4PD_1$ are collinear. To do this we first redefine $A_1$ as the intersection $A_1 = AQ'\cdot EF$ and we show that $A_1 = T_P(D_1)$. This will imply the collinearity of the points $AA_4PD_1$ with $A_4 = T_P(D_4)$ using the map $\pi$. Using the cross-ratio and the fact that $A_0$ is the midpoint of segment $EF$ we have that
\[\frac{EA_1}{A_1F} = \frac{A_1E}{FA_1} = (FE, A_0A_1) \stackrel{A}{=}(BC, D_2D_4) = \frac{BD_2}{D_2C} \div \frac{BD_4}{D_4C}.\]
Using Lemma 3.4 with $A^* = A_2$ on $EF$ and $A^* = A'_2$ on $E_3F_3$, where we denote the analogues of the points $D_i, A_i$ corresponding to $P'$ by $D_i', A_i'$ (so that $D_2'=D_4$ and $T_{P'}(B)=E_3$, etc.), we have
\[\frac{BD_2}{D_2C} = \frac{EA_2}{A_2F} = \frac{AE}{AF}\cdot\frac{AB}{AC}\text{ and } \frac{BD_4}{D_4C} = \frac{BD'_2}{D'_2C} = \frac{E_3A'_2}{A'_2F_3} = \frac{AE_3}{AF_3}\cdot \frac{AB}{AC}.\]
Hence, the above ratio $EA_1/A_1F$ becomes
\[\frac{EA_1}{A_1F} = \frac{AE}{AF}\frac{AF_3}{AE_3} = \frac{AE}{AF} \frac{FB}{EC} = \frac{EA}{AF} \frac{FB}{CE} = \frac{BD}{DC},\]
by Ceva's theorem and the fact that $(F, F_3)$ and $(E, E_3)$ are isotomic pairs of conjugates on $AB$ and $AC$, respectively. This implies that $A_1 = T_P(D) = T_P(D_1)$ and completes the proof of the theorem.
\end{proof}

\noindent The following fact is a simple corollary of Theorem \ref{thm:2.4}, but is important enough in the following development to state as a theorem.

\begin{thm} 
$T_PK(P) = T_P(Q') = P$.
\label{thm:2.5}
\end{thm}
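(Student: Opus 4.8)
The plan is to read this off the Collinearity Theorem (Theorem~\ref{thm:2.4}), which already contains everything needed. Recall that $Q' = K\circ\iota(P') = K(P)$ is the isotomcomplement of $P'$, and that by the notation fixed before Theorem~\ref{thm:2.4} the cevian triangle of $Q'$ with respect to $ABC$ is $D_4E_4F_4$, with $D_4 = AQ'\cdot BC$, $E_4 = BQ'\cdot AC$, $F_4 = CQ'\cdot AB$; in particular $Q'$ lies on each of the three cevians $AD_4$, $BE_4$, $CF_4$ of $ABC$.

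First I would record what Theorem~\ref{thm:2.4} says about the images $A_4 = T_P(D_4)$, $B_4 = T_P(E_4)$, $C_4 = T_P(F_4)$ of these traces: the sets $AA_4PD_1$, $BB_4PE_1$, $CC_4PF_1$ are collinear. Since $D_1 = D = AP\cdot BC$, the first of these lines is the cevian $AP$ of $P$; hence $A_4\in AP$, and symmetrically $B_4\in BP$ and $C_4\in CP$.

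Next I would push the three cevians of $Q'$ forward by the affine map $T_P$. The line $AD_4$ maps to the line $T_P(A)T_P(D_4) = DA_4$. Now $D$ and $A_4$ both lie on $AP$, and they are distinct: $T_P$ is a bijection, so $D = T_P(A)$ could equal $A_4 = T_P(D_4)$ only if $A = D_4$, which is impossible since $D_4$ lies on line $BC$ and $A$ does not. Therefore $T_P(AD_4) = AP$, so $T_P(Q')\in AP$; running the same argument through $B$ and $C$ gives $T_P(Q')\in BP$ and $T_P(Q')\in CP$. As $P$ is not on a side of $ABC$, the three cevians $AP$, $BP$, $CP$ are distinct and meet only in $P$, whence $T_P(Q') = P$. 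Since $Q' = K(P)$, this is exactly the assertion $T_PK(P) = T_P(Q') = P$. The case in which $P$, and therefore $Q' = K(P)$, is infinite requires no change, since Theorem~\ref{thm:2.4} is valid then as well.

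I do not expect any genuine obstacle: the only point to watch is the step ``$T_P(AD_4) = DA_4 = AP$'', which needs $A_4\ne D$, and this is forced by the injectivity of $T_P$. If one insisted on a proof independent of Theorem~\ref{thm:2.4}, the real work would be to re-prove directly that $T_P(D_4)$ lands on the cevian $AP$ --- most naturally through the involution $\pi$ on $BC$, or through the cross-ratio identity of Lemma~\ref{lem:2.3} --- but with Theorem~\ref{thm:2.4} in hand the present theorem is the promised one-line corollary.
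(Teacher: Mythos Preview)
Your argument is correct and is essentially the paper's own proof: both deduce Theorem~\ref{thm:2.5} directly from the Collinearity Theorem by observing that $T_P$ sends the cevians of $Q'$ to the cevians of $P$. The paper's version is the one-liner $T_P(Q')=T_P(AD_4\cdot BE_4)=DA_4\cdot EB_4=DA\cdot EB=P$, using only two of the three cevians; you use all three and are more explicit about why $D\ne A_4$, which the paper leaves implicit.
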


\begin{proof}
$T_P(Q') = T_P(AD_4\cdot BE_4) = DA_4 \cdot EB_4 = DA \cdot EB = P$.
\end{proof}

\bigskip

\begin{thm}[Homothety theorem]
\label{thm:2.6}
The affine mapping $T_PT_{P'}$ taking $ABC$ to $A_3B_3C_3$ is either a homothety, whose center is the ordinary point $X = AA_3\cdot BB_3 = AA_3 \cdot CC_3$ lying on the line $PQ'$, or a translation in the direction of the line $PQ'$. Thus, triangles $ABC$ and $A_3B_3C_3$ are either homothetic or congruent.
\end{thm}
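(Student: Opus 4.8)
The plan is to prove that $\mathcal S = T_P \circ T_{P'}$ acts as the identity on the line at infinity $l_\infty$; this forces $\mathcal S$ to be a homothety or a translation, and the identifications of its center and direction then come for free. Recall that $\mathcal S$ carries $ABC$ onto $A_3B_3C_3 = T_P(D_3E_3F_3)$, and that by the cevian nest theorem these two triangles are perspective, the center of perspectivity being the point $X = AA_3 \cdot BB_3 = AA_3 \cdot CC_3$ (or the common point at infinity of three parallel lines). Hence, if $\mathcal S$ turns out to be a homothety, its center lies on each of $A\,\mathcal S(A)=AA_3$, $B\,\mathcal S(B)=BB_3$, $C\,\mathcal S(C)=CC_3$ and so equals $X$; and if $\mathcal S$ is a translation, then $AA_3 \pa BB_3 \pa CC_3$ is its direction. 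To locate things on $PQ'$, note that $Q' = K(P) = K \circ \iota(P')$ is the isotomcomplement of $P'$, so Theorem \ref{thm:2.2} applied to $P'$ gives $T_{P'}(Q')=Q'$, and then Theorem \ref{thm:2.5} gives $T_P(Q')=P$; hence $\mathcal S(Q')=P$, so the center of $\mathcal S$ (if it is a homothety) lies on $Q'\,\mathcal S(Q')=Q'P$, i.e.\ $X\in PQ'$, while in the translation case the translation vector is $\overrightarrow{Q'P}$.

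The real content is thus that $\mathcal S$ fixes $l_\infty$ pointwise, equivalently that $B_3C_3 \pa BC$, $C_3A_3 \pa CA$ and $A_3B_3 \pa AB$. (All three are needed: any two would only pin $\mathcal S$ down up to an affine map whose linear part is diagonal --- not necessarily scalar --- in a basis of side directions.) Since the configuration is symmetric under permuting $A,B,C$ (with $D,E,F$ and $D_3,E_3,F_3$ permuted accordingly), it suffices to prove $B_3C_3 \pa BC$. Because $T_P$ is affine and $B_3C_3 = T_P(E_3F_3)$, this is equivalent to $E_3F_3 \pa T_P^{-1}(BC)$, and $T_P^{-1}(BC)$ is a side of the triangle $T_P^{-1}(ABC)$. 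A short computation with directed ratios --- of the type carried out in the proof of Theorem \ref{thm:2.4}, using Ceva's theorem and the fact that $(E,E_3)$ and $(F,F_3)$ are isotomic pairs on $AC$ and $AB$ --- identifies $T_P^{-1}(ABC)$ as the anticevian triangle of $Q'$, so the assertion to be proved is: \emph{the cevian triangle $D_3E_3F_3$ of $P'$ and the anticevian triangle of the isotomcomplement $Q'$ of $P'$ have respectively parallel sides.}

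I expect this parallelism to be the main obstacle. It can be established by one more round of Ceva/Menelaus ratio bookkeeping, or equivalently by computing the linear part of $T_P\circ T_{P'}$ directly and checking that it is a scalar multiple of the identity. Once the three parallelisms are in hand, $\mathcal S$ fixes three points of $l_\infty$, hence all of $l_\infty$, so it is a homothety or a translation; the triangles $ABC$ and $A_3B_3C_3=\mathcal S(ABC)$ are then homothetic or congruent, and the identifications of the first paragraph complete the proof. The homothety and translation cases are distinguished by whether $AA_3,BB_3,CC_3$ meet in an ordinary point or are parallel --- equivalently, by whether the linear part of $\mathcal S$ differs from, or equals, the identity.
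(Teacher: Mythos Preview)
Your overall architecture is right and matches the paper: show that $\mathcal S=T_PT_{P'}$ is the identity on $l_\infty$, then read off the homothety/translation dichotomy; and your identification of the center/direction via $\mathcal S(Q')=P$ (using $T_{P'}(Q')=Q'$ from Theorem~\ref{thm:2.2} and $T_P(Q')=P$ from Theorem~\ref{thm:2.5}) is exactly how the paper does it.

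The gap is in the key step. You reduce ``$\mathcal S$ fixes $l_\infty$'' to the parallelism $E_3F_3\parallel T_P^{-1}(BC)$ and then invoke the identification of $T_P^{-1}(ABC)$ with the anticevian triangle of $Q'$. In the paper's logical order that identification is Corollary~\ref{cor:2.8}(a) (with $P,P'$ swapped), which is \emph{deduced from} the Homothety theorem via Theorem~\ref{thm:2.8}; quoting it here is circular unless you supply the promised ``short computation with directed ratios,'' which you do not. Likewise, the final parallelism between the cevian triangle of $P'$ and the anticevian triangle of $Q'$ is explicitly left as ``the main obstacle'' with only a pointer to Ceva/Menelaus. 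So as written, the proposal is a correct outline with the central argument missing.

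The paper's route is genuinely different and avoids both unproved claims. Rather than proving that sides are parallel, it shows directly that $T_P$ and $T_{P'}$ are \emph{inverse} projectivities on $l_\infty$ by tracking three specific points. From $T_{P'}(Q)=P'$ (Theorem~\ref{thm:2.5} with $P$ replaced by $P'$) one gets $T_{P'}(AQ)=D_3P'=AP'$, so $T_{P'}$ sends $A_\infty=AQ\cdot l_\infty$ to $D_\infty=AP'\cdot l_\infty$. Since $AP'\parallel D_0Q$ (Corollary~\ref{cor:1.1}) and $T_P(D_0Q)=A_0Q=AQ$ (Theorems~\ref{thm:2.2} and~\ref{thm:1.3}), $T_P$ sends $D_\infty$ back to $A_\infty$. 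Doing the same at $B$ and $C$ and invoking the fundamental theorem of projective geometry gives $T_PT_{P'}=\mathrm{id}$ on $l_\infty$ with no ratio chasing at all. If you want to keep your reduction, you should either carry out the promised Ceva/Menelaus computation in full, or replace it with this three-points-at-infinity argument.
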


\begin{proof}
Write $T_1$ for $T_P$ and $T_2$ for $T_{P'}$, and let $\li$ be the line at infinity, as usual. We first show that $T_1$ and $T_2$ are inverse mappings on $\li$. Assume that the points $P'$ and $Q$ are ordinary. Note that $T_2$ maps the line $AQ$ to $D_3P' = AP'$, since $T_2(Q) = P'$ (by Theorem 3.6). Hence, $T_2$ maps the point at infinity $A_\infty$ on $AQ$ to the point at infinity $D_\infty$ on $AP'$. On the other hand, $AP'$ is parallel to $D_0Q$ (Corollary \ref{cor:1.1}), so $D_\infty$ lies on $D_0Q$. Moreover, $T_1(D_0Q) = A_0Q = AQ$ by Theorem \ref{thm:2.2} and Theorem \ref{thm:1.3}. Therefore, $T_1(D_\infty) = A_\infty$. Arguing in the same way with $B_\infty = BQ\cdot l_\infty, C_\infty = CQ\cdot \l_\infty$ and $E_\infty = BP'\cdot \li, F_\infty = CP'\cdot \li$, we see that on the line $\li$
\begin{center}
$T_2$ induces the projectivity $A_\infty B_\infty C_\infty \barwedge D_\infty E_\infty F_\infty$, while\\
$T_1$ induces the projectivity $D_\infty E_\infty F_\infty \barwedge A_\infty B_\infty C_\infty$.
\end{center}
The fundamental theorem of projective geometry now implies that $T_1T_2$ is the identity map on $\li$. (Note that $A_\infty, B_\infty, C_\infty$ are distinct points since they lie on the concurrent lines $AQ, BQ, CQ$.  If $Q$ were on $AB$, say, then $P'$ would lie on the anti-complementary triangle of $ABC$, so the trace $F_3=CP' \cdot AB$ of $P'$ would lie on $l_\infty$, implying that $F=F_3$, and $P$ would also lie on the anti-complementary triangle, contrary to the standing hypothesis about $P$.) On the other hand, if $P' = Q$ is an infinite point, then $P$ and $Q'$ are ordinary, and we can apply the above argument to the map $T_2T_1$. Since this map is the identity on $\li$, so is $T_1T_2$.\\
\\
Now it is clear that $T_1T_2(ABC) = T_1(D_3E_3F_3) = A_3B_3C_3$. By the above argument, the mapping $\cS = T_1T_2$ fixes the point $AA_3\cdot \li$, so $\cS(A) = A_3$ implies that $AA_3$ is an invariant line, as are $BB_3$ and $CC_3$ (and any line of the form $Y\cS(Y)$). Hence $X = AA_3\cdot BB_3$ is an invariant point of $\cS$. If $X$ is an ordinary point, then $\cS$ is a projective homology [co2], which must be a homothety since it takes any line to a parallel line. It follows that $ABC$ and $A_3B_3C_3$ are homothetic from the center $X$. Since $\cS(Q') = T_1T_2(Q') = T_1(Q') = P$, $X$ lies on the line $PQ'$.\\
\\
If $X$ is a point at infinity, then $AA_3, BB_3, CC_3$ are parallel, so $\cS$ must be a translation. Since $\cS(Q') = P$, the translation is in the direction of the line $PQ'$.
\end{proof}

In order to further describe the point $X$ we prove the following theorem of Grinberg.

\begin{thm}[Thm. 4 in \cite{gr1}]\label{thm:2.7} If parallel lines are drawn to the sides $EF, DF, DE$ of the cevian triangle for $P$ through the respective vertices $A, B$, and $C$, then the resulting triangle is the anticevian triangle of $Q$ for triangle $ABC$.
\end{thm}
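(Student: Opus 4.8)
The plan is the following. Set $A^* = T_{P'}^{-1}(A)$, $B^* = T_{P'}^{-1}(B)$, $C^* = T_{P'}^{-1}(C)$. I will show (i) that $A^*B^*C^*$ is the anticevian triangle of $Q$ with respect to $ABC$, so that in particular its sides $B^*C^*$, $C^*A^*$, $A^*B^*$ pass through $A$, $B$, $C$ respectively; and (ii) that $B^*C^* \pa EF$, $C^*A^* \pa FD$, and $A^*B^* \pa DE$. Statements (i) and (ii) together say that $B^*C^*$, $C^*A^*$, $A^*B^*$ are exactly the three parallel lines in the theorem, so the triangle they bound is the anticevian triangle of $Q$, as claimed. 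For the basic input I apply Theorem \ref{thm:2.5} with $P$ replaced by $P'$ (legitimate, since $P'$ again satisfies the standing hypotheses), and note that the complement $K(P')$ is by definition the isotomcomplement $Q$ of $P$; this gives $T_{P'}(Q) = P'$, hence $T_{P'}^{-1}(P') = Q$.

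For (i), recall that the anticevian triangle of $Q$ with respect to $ABC$ is the unique triangle $X^*Y^*Z^*$ with $X^*, Y^*, Z^*$ on the cevian lines $AQ, BQ, CQ$ and with $A, B, C$ on the respective opposite sides $Y^*Z^*, Z^*X^*, X^*Y^*$. Now $T_{P'}$ maps the line $AQ$ onto the line joining $T_{P'}(A) = D_3$ and $T_{P'}(Q) = P'$; since $D_3$ lies on $AP'$, this image is the line $AP'$, which contains $A$, so $A^* = T_{P'}^{-1}(A)$ lies on $AQ$. Similarly $T_{P'}$ maps $BC$ onto $E_3F_3$, and $D_3 = T_{P'}(A)$ lies on $BC$, so $A = T_{P'}^{-1}(D_3)$ lies on $T_{P'}^{-1}(BC) = B^*C^*$. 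The four remaining incidences follow from the same argument at $B$ and $C$, which establishes (i).

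For (ii), note that $EF = T_P(BC)$ (because $T_P(B) = E$, $T_P(C) = F$), while $B^*C^* = T_{P'}^{-1}(BC)$; applying the affine map $T_{P'}$, the relation $B^*C^* \pa EF$ is equivalent to $BC \pa T_{P'}(T_P(BC))$, i.e.\ to the assertion that the composite $T_{P'}\circ T_P$ carries $BC$ to a parallel line. But the proof of the Homothety Theorem (Theorem \ref{thm:2.6}) shows that on the line at infinity $\li$ the maps $T_P$ and $T_{P'}$ induce mutually inverse projectivities; hence $T_{P'}\circ T_P$ induces the identity on $\li$, so it sends every line to a line with the same point at infinity, that is, to a parallel line. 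Thus $B^*C^* \pa EF$, and $C^*A^* \pa FD$, $A^*B^* \pa DE$ follow in the same way, which proves (ii).

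The incidence verification in (i) is routine. The step needing care is (ii): one must work with the composite in the order $T_{P'}\circ T_P$ (note that $T_P^{-1}(ABC)$ is instead the anticevian triangle of $Q'$, not of $Q$), and one must draw on the \emph{proof} of Theorem \ref{thm:2.6} — the fact that $T_P$ and $T_{P'}$ are inverse on $\li$ — rather than on its statement. I would also record the implicit nondegeneracy, namely that $AQ, BQ, CQ$ are distinct genuine lines through $Q$; this holds under the standing hypotheses exactly as in the proof of Theorem \ref{thm:2.6} (if $Q$ lay on a side or at a vertex of $ABC$, then $P'$, and hence $P$, would lie on the anti-complementary triangle), so that the three parallel lines really do bound a triangle. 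The argument is otherwise purely affine and carries over with only cosmetic changes when $P$ or $P'$ lies on $\li$.
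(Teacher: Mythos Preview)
Your argument is correct and takes a genuinely different route from the paper. The paper proves this theorem by introducing the polarity with respect to the inscribed conic $\mathcal{I}$ tangent to the sides at $D,E,F$: the lines through $A,B,C$ parallel to $EF,DF,DE$ are identified as the polars $a_0,b_0,c_0$ of the midpoints $A_0,B_0,C_0$; Theorem~\ref{thm:1.3} then shows that $Q$ is the pole of $\li$ (the center of $\mathcal{I}$), and a short polarity check establishes the anticevian incidences. Your proof stays entirely within the affine framework: you show directly that $T_{P'}^{-1}(ABC)$ is the anticevian triangle of $Q$ via the relation $T_{P'}(Q)=P'$ from Theorem~\ref{thm:2.5}, and you get the parallelism from the fact (extracted from the proof of the Homothety Theorem) that $T_P$ and $T_{P'}$ are mutually inverse on $\li$, hence $T_{P'}\circ T_P$ fixes $\li$ pointwise. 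In effect you prove Corollary~\ref{cor:2.8}(a) first and deduce the theorem from it, reversing the paper's logical order. Your approach has the virtue of avoiding conics altogether and meshing seamlessly with the surrounding affine machinery; the paper's approach is more self-contained (it needs only Theorem~\ref{thm:1.3}, not the Homothety Theorem) and yields as a by-product the identification of $Q$ as the center of the inconic. There is no circularity in your use of Theorem~\ref{thm:2.6}, since its proof is independent of the present statement, and your nondegeneracy remark is well placed.
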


\begin{proof}
Consider the polarity induced by the inscribed conic $\cI$ in $ABC$ through the points $D, E, F$. Then the lines $d = BC, e = AC, f = AB$ are the polars of the points $D, E, F$, while the polars of $A, B, C$ are the lines $a = EF, b = DF, c = DE$. The lines through $A, B, C$ parallel to these lines are the polars $a_0, b_0, c_0$ of the midpoints $A_0, B_0, C_0$ of the sides of $DEF$. This is because the polar of $A_0$ is the line through $A$ and the harmonic conjugate of $A_0$ with respect to $E, F$, which is the point at infinity on $EF$. It follows from this that the pole of the line at infinity lies on $AA_0$, so Theorem \ref{thm:1.3} implies that this pole is $Q$. Let the vertices of the triangle formed by $a_0, b_0, c_0$ be $A' = b_0\cdot c_0, B' = a_0\cdot c_0, C' = a_0\cdot b_0$. We must show that $A'Q$ lies on $A$, $B'Q$ lies on $B$, $C'Q$ lies on $C$. Using the polarity we see that $A'Q$ lies on $A$ if and only if $B_0C_0\cdot q=B_0C_0 \cdot \li$ lies on $EF$. But this is obvious because $B_0$ and $C_0$ are midpoints in triangle $DEF$, so that $B_0C_0$ is parallel to $EF$.
\end{proof}

We can now prove

\begin{thm}\label{thm:2.8} The fixed point $X = AA_3\cdot BB_3$ of $T_PT_{P'}$ is the $P$-ceva conjugate of $Q$. The cevian triangle of $P$ is homothetic to the anticevian triangle of $Q$ for triangle $ABC$ from the center $X$ if $X$ is an ordinary point, and is congruent to this triangle otherwise.
\end{thm}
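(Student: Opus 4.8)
The plan is to recognize the anticevian triangle of $Q$ with respect to $ABC$ as $T_{P'}^{-1}(ABC)$, and then to transport the map $\cS=T_PT_{P'}$ of the Homothety Theorem (Theorem~\ref{thm:2.6}) between this triangle and the cevian triangle $DEF$ of $P$. By Theorem~\ref{thm:2.7} the anticevian triangle of $Q$ is the triangle $A'B'C'$ cut out by the lines through $A$, $B$, $C$ parallel to $EF$, $DF$, $DE$, and the proof of that theorem also shows that $A,Q,A'$ are collinear, as are $B,Q,B'$ and $C,Q,C'$. Hence the complete quadrangle with vertices $A',B',Q,C'$ has diagonal points $A'B'\cdot QC'=C$, $A'C'\cdot QB'=B$, $A'Q\cdot B'C'=A$, i.e.\ diagonal triangle $ABC$.

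The key step is to prove $T_{P'}(A')=A$, and cyclically $T_{P'}(B')=B$, $T_{P'}(C')=C$, for this gives $A'B'C'=T_{P'}^{-1}(ABC)$. Applying the harmonic property of the complete quadrangle (see~\cite{cox}) to the side $A'Q$ of the quadrangle $A'B'QC'$ yields $(A',Q;\,A,D_2)=-1$, where $D_2=AQ\cdot BC$. Now $T_{P'}$ maps the line $AQ$ to the line $D_3P'=AP'$, carrying $A\mapsto D_3$, $Q\mapsto P'$ and $D_2\mapsto D_2^{*}:=AP'\cdot E_3F_3$, the last because $T_{P'}(BC)=E_3F_3$; since $T_{P'}$ preserves cross-ratio, $(T_{P'}(A'),\,P';\ D_3,\,D_2^{*})=-1$. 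On the other hand the complete quadrangle $ABP'C$ has diagonal points $AP'\cdot BC=D_3$, $AC\cdot BP'=E_3$, $AB\cdot P'C=F_3$, hence diagonal triangle $D_3E_3F_3$, and the same harmonic property applied to its side $AP'$ gives $(A,\,P';\ D_3,\,D_2^{*})=-1$. Thus $T_{P'}(A')$ and $A$ are both the harmonic conjugate of $P'$ with respect to the pair $\{D_3,D_2^{*}\}$, so $T_{P'}(A')=A$.

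With this in hand, $\cS=T_PT_{P'}$ carries the anticevian triangle $A'B'C'$ of $Q$ onto the cevian triangle $DEF$ of $P$, with $\cS(A')=T_P(A)=D$, $\cS(B')=E$, $\cS(C')=F$. By Theorem~\ref{thm:2.6}, $\cS$ is a homothety about the ordinary fixed point $X=AA_3\cdot BB_3$, or else a translation. In the homothety case the collinearities $X,A',D$ and $X,B',E$ and $X,C',F$ show that $DA'$, $EB'$, $FC'$ concur at $X$, so $X$ is the perspector of $DEF$ and $A'B'C'$, that is, the $P$-ceva conjugate of $Q$; moreover $\cS$, being a homothety from $X$ taking $A'B'C'$ to $DEF$, exhibits these triangles as homothetic from $X$. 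If instead $X$ is infinite, $\cS$ is a translation moving $A'B'C'$ onto the congruent triangle $DEF$, and the common infinite point of $AA_3$ and $A'D$ is $X$, still the $P$-ceva conjugate of $Q$. The argument is uniform when $P$ or $P'$ is infinite, Theorem~\ref{thm:2.6} again furnishing the homothety or translation.

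The main obstacle is the identification $T_{P'}(A')=A$: one must spot the two relevant harmonic quadruples — the one coming from the quadrangle $A'B'QC'$ on the line $AQ$, and the one from the quadrangle $ABP'C$ on the line $AP'$ — and see that $T_{P'}$ carries the first configuration onto the second precisely because it sends $AQ$ to $AP'$ and $BC$ to $E_3F_3$. Everything after that is a formal consequence of the Homothety Theorem.
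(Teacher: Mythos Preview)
Your proof is correct and shares the same architecture as the paper's: identify the anticevian triangle $A'B'C'$ of $Q$ with $T_{P'}^{-1}(ABC)=\cS^{-1}(DEF)$, and then let the Homothety Theorem do the rest. The difference lies entirely in how that identification is made.

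You start from $A'B'C'$ (known by Theorem~\ref{thm:2.7} to be the anticevian triangle of $Q$) and push it forward by $T_{P'}$, using two harmonic quadruples: $(A',Q;A,D_2)=-1$ from the quadrangle $A'B'QC'$ with diagonal triangle $ABC$, and $(A,P';D_3,D_2^{*})=-1$ from the quadrangle $ABP'C$ with diagonal triangle $D_3E_3F_3$. Since $T_{P'}$ sends $(A,Q,D_2)$ to $(D_3,P',D_2^{*})$, matching the two harmonic sets forces $T_{P'}(A')=A$. This is a clean, self-contained projective argument.

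The paper instead runs the identification backwards and more cheaply: it \emph{defines} $A'B'C'=\cS^{-1}(DEF)$ and then checks the hypotheses of Theorem~\ref{thm:2.7}. Because $\cS$ is a homothety or translation, the sides of $\cS^{-1}(DEF)$ are automatically parallel to those of $DEF$; and because $A_3B_3C_3=\cS(ABC)$ is inscribed in $DEF$, applying $\cS^{-1}$ shows $ABC$ is inscribed in $\cS^{-1}(DEF)$. Theorem~\ref{thm:2.7} then forces $\cS^{-1}(DEF)$ to be the anticevian triangle of $Q$. No harmonic computation is needed; the parallelism comes for free from $\cS$ fixing $l_\infty$ pointwise.

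So the paper exploits Theorem~\ref{thm:2.7} as a \emph{characterization} (parallel sides through $A,B,C$ determine the anticevian triangle), while you exploit it only as a \emph{construction} and then do the matching by hand. Your route is a bit longer but has the virtue of exhibiting explicitly why $T_{P'}$ carries $A'$ to $A$; the paper's route is shorter but leans more heavily on the uniqueness implicit in Theorem~\ref{thm:2.7}. One small caution: your closing remark that ``the argument is uniform when $P$ or $P'$ is infinite'' deserves a line of justification, since when $P'$ is infinite $Q=P'$ and the harmonic set on $AQ$ degenerates; the paper's version sidesteps this by working directly with $\cS^{-1}(DEF)$.
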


\begin{proof}
Let $\cS = T_PT_{P'}$. Consider the triangle $A'B'C' = \cS^{-1}(DEF)$. Then the sides of $A'B'C'$ are parallel to the sides of $DEF$, since $\cS$ fixes points at infinity. Furthermore, triangle $A_3B_3C_3$ is inscribed in triangle $DEF$, so $\cS^{-1}(A_3B_3C_3) = ABC$ is inscribed in triangle $A'B'C'$. By Theorem \ref{thm:2.7}, $A'B'C'$ must be the anticevian triangle of $Q$. The $P$-ceva conjugate of $Q$ is by definition the perspector of $DEF$ and $A'B'C'$, and by construction this point is the center $X = AA_3\cdot BB_3$ of Theorem 3.7, whether $\cS$ is a homothety or a translation. This proves the assertion.
\end{proof}

\begin{cor}\label{cor:2.8}
\begin{enumerate}[(a)]
\item The triangle $T_{P'}^{-1}(ABC)$ is the anticevian triangle of $Q$ for $ABC$.
\item If $X'$ is the $X$-point corresponding to $P'$, then $T_P(X') = X$ and $T_{P'}(X) = X'$.
\item $X$ is an ordinary point if and only if $X'$ is.
\end{enumerate}
\end{cor}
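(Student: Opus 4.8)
The plan is to deduce all three parts from the perspector description of $X$ in Theorem~\ref{thm:2.8}, using only that $T_P$, $T_{P'}$, and $\cS=T_PT_{P'}$ are affine maps, hence carry perspectors of triangles to perspectors, ordinary points to ordinary points, and $l_\infty$ to $l_\infty$.

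Part~(a) is essentially already in the proof of Theorem~\ref{thm:2.8}. Since $T_P^{-1}(DEF)=ABC$,
\[ T_{P'}^{-1}(ABC)=T_{P'}^{-1}\bigl(T_P^{-1}(DEF)\bigr)=(T_PT_{P'})^{-1}(DEF)=\cS^{-1}(DEF), \]
which is exactly the triangle $A'B'C'$ appearing in the proof of Theorem~\ref{thm:2.8} and shown there (via Theorem~\ref{thm:2.7}) to be the anticevian triangle of $Q$ with respect to $ABC$. That proof also pins down the vertices: $A'=\cS^{-1}(D)=T_{P'}^{-1}(A)$, $B'=T_{P'}^{-1}(B)$, $C'=T_{P'}^{-1}(C)$, with $A'$ the vertex ``associated with $A$'' (so that $A$ lies on side $B'C'$ and $AA'$ passes through $Q$), and similarly for $B',C'$.

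For part~(b) I would start from Theorem~\ref{thm:2.8}, which realizes $X$ as the $P$-ceva conjugate of $Q$, i.e.\ the perspector of the cevian triangle $DEF$ of $P$ and the anticevian triangle of $Q$, under the natural correspondence pairing $D$ on $BC$ with the vertex ``associated with $A$'', and so on. Combined with part~(a) this says
\[ X = DA'\cdot EB'\cdot FC', \qquad D=T_P(A),\ E=T_P(B),\ F=T_P(C), \]
with $A'=T_{P'}^{-1}(A)$, $B'=T_{P'}^{-1}(B)$, $C'=T_{P'}^{-1}(C)$ as in part~(a). Now apply the affine map $T_{P'}$: it carries this perspector to the perspector of the images, and $T_{P'}(A')=A$, $T_{P'}(B')=B$, $T_{P'}(C')=C$, while $T_{P'}(D)=A_3'$, $T_{P'}(E)=B_3'$, $T_{P'}(F)=C_3'$ --- here one uses $\iota(P')=P$, so the trace of $\iota(P')$ on $BC$ is $D$, i.e.\ $D=D_3'$, whence $T_{P'}(D)=T_{P'}(D_3')=A_3'$. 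Therefore $T_{P'}(X)=A_3'A\cdot B_3'B\cdot C_3'C=AA_3'\cdot BB_3'\cdot CC_3'=X'$. The remaining identity $T_P(X')=X$ follows either by running the same argument with $P$ and $P'$ interchanged (using $\iota^2=\mathrm{id}$), or directly from $T_P(X')=T_P\bigl(T_{P'}(X)\bigr)=\cS(X)=X$, since $X$ is the fixed point of $\cS$ by Theorem~\ref{thm:2.6}.

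Part~(c) is then immediate: $T_P$ is a bijection of the plane sending ordinary points to ordinary points and $l_\infty$ to $l_\infty$, so $X=T_P(X')$ is ordinary exactly when $X'$ is. The step I expect to need the most care is the label-tracking in part~(b): one must be sure the vertex correspondence used in Theorem~\ref{thm:2.8} to exhibit $X$ as a perspector is precisely the one whose image under $T_{P'}$ is the triple of cevians $AA_3',BB_3',CC_3'$ defining $X'$, i.e.\ one must keep straight which of the objects $D_3,A_3,A',\dots$ belong to $P$ and which to $P'$. Once the labels are fixed the argument is purely formal, and it is uniform in whether $\cS$ is a homothety or a translation and in whether $P$ or $P'$ is infinite, since Theorems~\ref{thm:2.6}, \ref{thm:2.7}, and~\ref{thm:2.8} are stated in that generality.
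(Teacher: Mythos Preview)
Your proof is correct and follows essentially the same approach as the paper: part~(a) is derived exactly as the paper does from $\cS^{-1}(DEF)=T_{P'}^{-1}(ABC)$, and for part~(b) the paper likewise applies an affine map to the perspector description from Theorem~\ref{thm:2.8}, only in the symmetric direction (it applies $T_P$ to the $P'$-picture to get $T_P(X')=X$ first, whereas you apply $T_{P'}$ to the $P$-picture to get $T_{P'}(X)=X'$ first). Your alternative derivation of $T_P(X')=X$ via $\cS(X)=X$ is a nice addition not in the paper.
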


\begin{proof}
(a) The anticevian triangle of $Q$ is $A'B'C' = \cS^{-1}(DEF) = T_{P'}^{-1}(ABC)$. (b) The perspector of $D_3E_3F_3$, the cevian triangle of $P'$, and $T_P^{-1}(ABC)$, the anticevian triangle of $Q'$, is $X'$. It follows that $T_P(X')$ is the perspector of triangles $A_3B_3C_3$ and $ABC$, hence $T_P(X') = X$. The second assertion in (b) follows on switching $P$ and $P'$. Part (c) is immediate from (b).
\end{proof}

\begin{thm}\label{thm:2.9} If $P$ does not lie on $\iota(\li)$ (the Steiner circumellipse for $ABC$), then $Q$ is the only fixed point of $T_P$ in the finite plane.
\end{thm}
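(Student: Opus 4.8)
The plan is to use the fact, established in Theorem 3.2, that $Q = K \circ \iota(P)$ is always a fixed point of $T_P$, and to argue that a second fixed point would force $T_P$ to have special structure incompatible with the hypothesis. First I would recall that an affine map of the plane that fixes an ordinary point is, by the classification used in the proof of Theorem 3.7 (a projective homology fixing $\li$ pointwise is excluded since $T_P$ moves $\li$ nontrivially in general), determined up to conjugacy by its action on $\li$; more concretely, $T_P$ has a fixed point besides $Q$ precisely when either $T_P$ fixes an entire line through $Q$ pointwise, or $T_P$ fixes a point at infinity and the corresponding eigenvalue equals $1$. So the crux is to understand when $T_P$ fixes a point of $\li$, i.e., when $T_P$ restricted to $\li$ has $1$ as an eigenvalue.

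The key computation is the action of $T_P$ on the line at infinity. From the proof of Theorem 3.7 we know that $T_{P'}$ induces the projectivity $A_\infty B_\infty C_\infty \barwedge D_\infty E_\infty F_\infty$ on $\li$, where $A_\infty$ is the infinite point of $AQ$ and $D_\infty$ that of $AP'$, and symmetrically; equivalently $T_P$ induces $D_\infty E_\infty F_\infty \barwedge A_\infty B_\infty C_\infty$. I would show that $T_P$ fixes a point $W_\infty$ of $\li$ with eigenvalue $1$ if and only if the line $QW$ (for the ordinary representative) is fixed by $T_P$, which by $T_P(Q) = Q$ and the parallelism $T_P(D_0Q) = A_0Q = AQ$ (used in Theorem 3.7) translates into a condition saying that the affine map $K^{-1} \circ T_P \circ K$, or rather the "derivative" linear part of $T_P$, has a repeated eigenvalue or a unit eigenvalue. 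The cleanest route is: $T_P$ has a finite fixed point other than $Q$ $\iff$ the linear part of $T_P$ (computed in the basis given by two of the directions $D_\infty, E_\infty$) has $1$ as an eigenvalue with the eigenspace not lying in the "translation" direction $\iff$ the determinant relation degenerates. I expect this to come down to the statement that $P'$ is an infinite point, i.e., $P \in \iota(\li)$ — because that is exactly the degenerate case appearing repeatedly in the preceding results (Theorem 3.7's translation case, and the remark that $T_P \circ T_{P'} = K^{-1}$ precisely on the Steiner circumellipse).

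Concretely, here is the route I would take. Suppose $T_P$ fixes an ordinary point $Y \ne Q$. Then $T_P$ fixes the line $QY$, hence fixes its infinite point $Y_\infty$, so $Y_\infty$ is a fixed point of the projectivity $D_\infty E_\infty F_\infty \barwedge A_\infty B_\infty C_\infty$ on $\li$. Moreover $T_P$ restricted to the invariant line $QY$ is an affine map of that line with two fixed points $Q$ and $Y$, hence is the identity on $QY$. Now I would derive a contradiction with the hypothesis $P \notin \iota(\li)$: using Theorem 3.4 (Collinearity Theorem) and Theorem 3.6, identify the line $QY$ geometrically. Since $AA_0QD_2$ are collinear and $T_P(D_0Q) = AQ$, if $QY$ were pointwise fixed then $D_0Q$ would have to be carried to itself, forcing $A = D_0$ or forcing the complement map $K$ to be degenerate along that direction — which, after unwinding via $Q = K(P')$ and $T_P \circ K = K' \circ T_P$, forces $P'$ to be at infinity. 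I would also need to rule out the translation possibility: if $T_P$ were a translation it would have no finite fixed point at all, contradicting $T_P(Q) = Q$ (this case is vacuous here), and if $T_P$ has a fixed point at infinity "absorbed" into a finite fixed line, the same parallelism argument applies.

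The main obstacle I anticipate is making the "$1$ is an eigenvalue of the linear part $\iff P \in \iota(\li)$" step fully synthetic rather than sliding into barycentric coordinates. The natural synthetic handle is Theorem 3.7 together with Corollary 3.10: the pair $(T_P, T_{P'})$ behaves like a pair of mutually inverse maps on $\li$, and $T_P$ acquires an extra unit eigenvalue exactly when $T_P \circ T_{P'}$ degenerates from a genuine homothety to $K^{-1}$, which the introduction already flags as the Steiner-circumellipse condition; so I would try to show directly that a second finite fixed point of $T_P$ makes the homothety $T_P \circ T_{P'}$ of Theorem 3.7 either trivial or a translation in a way that pins $X$ (equivalently the infinite point structure) down, and then invoke Theorem 3.9 / Corollary 3.10 to conclude $P' \in \li$. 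Handling the borderline sub-cases where $P$ lies on a median of $ABC$ (so some of $A_\infty, B_\infty, C_\infty$ may coincide or the projectivity on $\li$ degenerates) will require separate, but easy, direct arguments, and I would dispatch those first to reduce to the generic configuration.
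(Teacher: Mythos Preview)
Your opening moves are the same as the paper's: assume a second ordinary fixed point $R$, observe that the line $m=QR$ is invariant, pick up the third fixed point $m\cdot l_\infty$, and conclude that $T_P$ is the identity on $m$. After that, however, your plan drifts into an eigenvalue heuristic that never lands. The assertion ``if $QY$ were pointwise fixed then $D_0Q$ would have to be carried to itself'' is unfounded: there is no a priori reason the fixed line should coincide with $D_0Q$, and the subsequent appeal to $T_P\circ K=K'\circ T_P$ and to the $T_PT_{P'}=K^{-1}$ characterization of $\iota(l_\infty)$ is never turned into an actual argument. In particular, your expected endpoint (``a second fixed point forces $P'\in l_\infty$'') is not what the situation actually yields.

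The step you are missing is that the pointwise-fixed line $m$ is \emph{forced to be a specific line}. Since $T_P$ fixes $S=m\cdot BC$, we have $S=T_P(S)\in T_P(BC)=EF$, so $S=BC\cdot EF$; likewise $m\cdot AC=AC\cdot DF$ and $m\cdot AB=AB\cdot DE$. Hence $m$ is the perspectrix of $ABC$ and $DEF$, i.e.\ the trilinear polar of $P$. The paper then exploits this identification: harmonic conjugacy $H(BC,DS)$ together with the fixed ratio at $S$ on $m$ forces $A_1=A_4$ on $EF$, and the Collinearity Theorem turns that into $AQ'=AP$, i.e.\ $G\in AP$. Repeating for $B$ and $C$ gives $P=G$, whereupon $T_P=K$ and $m=l_\infty$, contradicting $Q=G\notin l_\infty$. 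So the contradiction is reached via $P=G$, not via $P\in\iota(l_\infty)$; the hypothesis $P\notin\iota(l_\infty)$ is only used to ensure $Q$ is ordinary in the first place. Your eigenvalue route, as written, neither identifies $m$ nor supplies a synthetic substitute for this ratio computation.
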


\noindent {\bf Remark.} If the point $P$ does lie on the Steiner circumellipse for $ABC$, then it can be shown that $T_P$ has no ordinary fixed points, but does have the line $GT_P(G)$ as a fixed line, where $G$ is the centroid of $ABC$. See the proofs of Lemma 2.5 and Theorems 2.4 and 4.3 in [mm].

\begin{proof} Note, since $P$ does not lie on $\iota(\li)$, that the points $P'$ and $Q$ are ordinary points.  We already know from Theorem 3.2 that $T_P$ fixes $Q$. Suppose there is another finite fixed point $R$ of $T_P$. Then $m = QR$ is an invariant line for $T_P$. The line at infinity, $\li$, is also an invariant line since $T_P$ is an affine transformation. Therefore, $T_P$ fixes the point $M_\infty = m\cdot \li$. Since $T_P$ fixes three points on $m$, it fixes every point on $m.$\\
\\
Suppose $m\cdot BC = S$. Then $S = T_P(S) = T_P(m\cdot BC) = m\cdot EF$, which implies $S = BC\cdot EF$. Similarly, $m\cdot AB = AB\cdot DE$ and $m\cdot AC = AC\cdot DF$, so $m$ is the line of perspectivity of triangles $DEF$ and $ABC$. Hence, $m$ is the trilinear polar of the point $P$, and $S$ is the harmonic conjugate of $D$ with respect to $B$ and $C$. Projecting line $BC$ to line $FE$ from $A$ gives $(BC, DS) = -1 = (FE, A_4S) = (EF, A_4S)$ and since $S = T_P(S)$, the signed ratio of $S$ along $BC$ is the same as its ratio along $EF = T_P(BC)$:
\[\frac{BD}{DC} = -\frac{BS}{SC} = -\frac{ES}{SF} = \frac{EA_4}{A_4F}.\]
But the only point $A^*$ on $EF$ such that $\dfrac{BD}{DC} = \dfrac{EA^*}{A^*F}$ is $A_1 = T_P(D_1)$, so $A_1 = A_4$. By Theorem \ref{thm:2.4}, $AA_1 = AQ'$ and $AA_4 = AP$, so we have $AQ' = AP$. Since $K(P) = Q'$, this implies that the centroid $G$ lies on $AP$, so $P$ is on $AG$. Similarly, $P$ is on $BG$ and $CG$, so $P = G$. But then $T_P = T_G = K$ and the line of perspectivity $m = QR$ is the line at infinity, yet $Q = G$ is not on $m=\li$: a contradiction.
\end{proof}

\begin{thm}\label{thm:2.10} If $P$ is ordinary, the point $Q'=K(P)$ is the isotomcomplement of $Q$ with respect to the anticevian triangle of $Q$ for $ABC$.
\end{thm}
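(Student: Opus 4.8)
Here is the plan.

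The plan is to recognize the affine map $T_{P'}$, in a second guise, as the ``cevian-triangle map'' attached to the point $Q$ and the base triangle $A'B'C'$ (the anticevian triangle of $Q$ for $ABC$), and then to read off the isotomcomplement of $Q$ with respect to $A'B'C'$ by applying Theorem~\ref{thm:2.2} (Ehrmann's theorem) and Theorem~\ref{thm:2.9} in that base triangle. Throughout, write $A'B'C'$ for the anticevian triangle of $Q$ with respect to $ABC$, with vertices labelled so that $A$ lies on $B'C'$ and on line $A'Q$, and similarly for $B$ and $C$; then $ABC$ is the cevian triangle of $Q$ with respect to $A'B'C'$, with $A$ the trace of $Q$ on $B'C'$, etc., and all of $A,B,C$ are ordinary, so that Theorem~\ref{thm:2.2} is available for the pair $(A'B'C',Q)$.

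The first step is to check that $T_{P'}$, viewed as an affine map $A'B'C'\to ABC$, has the correct vertex correspondence $T_{P'}(A')=A$, $T_{P'}(B')=B$, $T_{P'}(C')=C$. By Corollary~\ref{cor:2.8}(a) we have $A'B'C'=T_{P'}^{-1}(ABC)$, so $T_{P'}^{-1}(A)$ is one of the vertices $A',B',C'$, and it suffices to see that it lies on line $AQ$. Now $T_{P'}(A)=D_3$, the trace of $P'$ on $BC$, and $T_{P'}(Q)=P'$ by Theorem~\ref{thm:2.5} applied to $P'$ (which gives $T_{P'}K(P')=P'$ and $K(P')=Q$); since $A$, $D_3$, $P'$ are collinear, $T_{P'}$ carries line $AQ$ onto line $AP'$, and as $A$ lies on $AP'$ it follows that $T_{P'}^{-1}(A)$ lies on line $AQ$, hence equals $A'$. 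The same argument at $B$ and $C$ completes this step, so $T_{P'}$ is exactly the map ``$T_Q$'' of Theorem~\ref{thm:2.2} for base triangle $A'B'C'$ and point $Q$.

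With this identification in hand, I would apply Theorem~\ref{thm:2.2} twice. For the base triangle $ABC$ and the point $P'$, it yields that $T_{P'}$ fixes the isotomcomplement of $P'$ with respect to $ABC$, namely $Q'=K(P)$. For the base triangle $A'B'C'$ and the point $Q$, it yields that $T_{P'}$ fixes $\widehat Q$, the isotomcomplement of $Q$ with respect to $A'B'C'$. Thus $Q'$ and $\widehat Q$ are both fixed points of the affine transformation $T_{P'}$. Since $P$ is ordinary, $P'=\iota(P)$ does not lie on the Steiner circumellipse $\iota(\li)$, so Theorem~\ref{thm:2.9} with $P'$ in place of $P$ shows that $Q'$ is the only fixed point of $T_{P'}$ in the finite plane. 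Hence, as soon as $\widehat Q$ is known to be ordinary, $\widehat Q=Q'$, which is the assertion of Theorem~\ref{thm:2.10}.

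It remains to settle that $\widehat Q$ is ordinary; since the complement map of $A'B'C'$ is affine, this reduces to showing that $Q$ does not lie on the Steiner circumellipse of $A'B'C'$. If $P$ lies on $\iota(\li)$ then $P'$, and hence $Q=K(P')$, is infinite, while the isotomic conjugation of $A'B'C'$ carries $\li$ onto the bounded Steiner circumellipse of $A'B'C'$; so the isotomic conjugate of $Q$ in $A'B'C'$ is ordinary and $\widehat Q$ is too. If $P$ does not lie on $\iota(\li)$, a brief barycentric computation (or a synthetic argument in the style of the proof of Theorem~\ref{thm:2.9}) shows that $Q$ lies on the Steiner circumellipse of $A'B'C'$ exactly when $P$ is infinite, which it is not. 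I expect the genuinely delicate point of the argument to be the vertex-correspondence bookkeeping of the second paragraph, since that is precisely what licenses reading $T_{P'}$ as the map $T_Q$ for the base triangle $A'B'C'$ and hence applying Ehrmann's theorem there; once that is secured, the identification $\widehat Q=Q'$ is immediate from Theorems~\ref{thm:2.2} and~\ref{thm:2.9}.
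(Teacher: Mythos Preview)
Your approach is essentially the paper's: identify $T_{P'}$ as the cevian map for the pair $(A'B'C',Q)$ via Corollary~\ref{cor:2.8}(a), then invoke Ehrmann's theorem and the uniqueness of the finite fixed point to force $\widehat Q=Q'$. Your explicit verification of the vertex correspondence $T_{P'}(A')=A$, etc., is a nice addition that the paper leaves implicit.

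One streamlining is available in your last paragraph. Rather than a case split and an appeal to an unwritten barycentric computation, observe (as the paper does) that any affine map $\Psi$ conjugates isotomic conjugation: if $\iota'$ denotes the isotomic map for $A'B'C'=T_{P'}^{-1}(ABC)$, then $T_{P'}^{-1}\circ\iota=\iota'\circ T_{P'}^{-1}$, and since affine maps fix $\li$ this carries the Steiner circumellipse of $ABC$ to that of $A'B'C'$. Now $P$ ordinary means $P'\notin\iota(\li)$, hence $Q=T_{P'}^{-1}(P')$ is off the Steiner circumellipse of $A'B'C'$, and Theorem~\ref{thm:2.9} applied to the pair $(A'B'C',Q)$ gives directly that $\widehat Q$ is the unique \emph{ordinary} fixed point of $T_{P'}$---so ordinariness of $\widehat Q$ comes for free and the conclusion $\widehat Q=Q'$ follows at once.
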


\begin{proof}
The unique affine mapping taking the vertices of the anticevian triangle $A'B'C' = T_{P'}^{-1}(ABC)$ of $Q$ to the vertices of $ABC$ is $T_{P'}$. Since the point $P$ is ordinary, the point $P'$ does not lie on the Steiner circumellipse for $ABC$, and therefore the point $Q = T_{P'}^{-1}(P')$ does not lie on the Steiner circumellipse for $A'B'C'$.  (If $\iota'$ is the isotomic map for $A'B'C'$, then $T_{P'}^{-1} \circ \iota = \iota' \circ T_{P'}^{-1}$, and affine maps fix the line $l_\infty$, so the Steiner circumellipse for $ABC$ is mapped to the Steiner circumellipse for $A'B'C'$.) It follows that the isotomcomplement of $Q$ with respect to $A'B'C'$ is the unique ordinary fixed point of the mapping $T_{P'}$, by Theorem \ref{thm:2.9}, so this point must be $Q'$.
\end{proof}

Next, we characterize the points $P$ on the Steiner circumellipse in terms of the mapping $T_P$.

\begin{thm}\label{thm:2.11} The point $P$ ($\ne A, B$, or $C$) lies on the Steiner circumellipse $\iota(\li)$ of $ABC$ if and only if $T_PT_{P'} = K^{-1}$.
\end{thm}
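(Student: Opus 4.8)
The plan is to reduce ``$T_PT_{P'}=K^{-1}$'' to one concrete incidence (and its analogues at $B$ and $C$) and then decide exactly when it holds. Write $\cS=T_PT_{P'}$ and $Q'=K(P)$, and keep the notation $D_3=AP'\cdot BC$, $A_3=T_P(D_3)$ of Theorem~\ref{thm:2.4}. Since $T_{P'}(A)=D_3$ we get $\cS(A)=T_P(D_3)=A_3$, and likewise $\cS(B)=B_3$, $\cS(C)=C_3$; note also that $\cS$ and $K^{-1}$ automatically agree at $Q'=K(P)$, because $\cS(Q')=P$ as in the proof of Theorem~\ref{thm:2.6}, and $K^{-1}(Q')=K^{-1}(K(P))=P$. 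Hence $\cS=K^{-1}$ if and only if $A_3=K^{-1}(A)$, $B_3=K^{-1}(B)$, and $C_3=K^{-1}(C)$. Now $K^{-1}(A)$ is the reflection $A^a$ of $A$ in the midpoint $D_0$ of $BC$, i.e. the vertex of the anti-complementary triangle opposite the side through $A$; in particular $A^a=3G-2A$ lies on the median $AG=AD_0$, so if $A^a\in EF$ then $A^a$ must be the point $A^{*}=AG\cdot EF$ of Lemma~\ref{lem:2.3}.

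\textbf{The key fact.} The heart of the matter is the claim that $A^a$ lies on $EF$ exactly when $P$ lies on the Steiner circumellipse $\iota(\li)$, and that in that case $\dfrac{EA^a}{A^aF}=\dfrac{DC}{BD}$ (with the two analogous statements at $B$ and $C$). Writing $E$ and $F$ affinely in terms of $A,B,C$, the requirement that $A^a=B+C-A$ lie on the line $EF$ reduces after a short computation to $\dfrac{AC}{AE}+\dfrac{AB}{AF}=1$, which one recognizes (passing to barycentric coordinates, where it becomes $vw+wu+uv=0$) as saying precisely that $P\in\iota(\li)$. Granting this, $A^a=A^{*}$ by the previous paragraph, so Lemma~\ref{lem:2.3} gives $\dfrac{EA^a}{A^aF}=\dfrac{EA^{*}}{A^{*}F}=\dfrac{AE}{AF}\cdot\dfrac{AB}{AC}$, and Ceva's theorem applied to the cevians $AD,BE,CF$ turns the right-hand side into $\dfrac{DC}{BD}$. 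On the other side, the computation in the proof of Theorem~\ref{thm:2.2} shows that $A_3=T_P(D_3)$ \emph{always} satisfies $\dfrac{EA_3}{A_3F}=\dfrac{DC}{BD}$, so $A_3$ is the unique point of $EF$ at this directed ratio from $E$.

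\textbf{Assembling the two implications.} If $P\in\iota(\li)$, then $A^a\in EF$ with $\dfrac{EA^a}{A^aF}=\dfrac{DC}{BD}=\dfrac{EA_3}{A_3F}$, so $A_3=A^a=K^{-1}(A)$; by symmetry $B_3=K^{-1}(B)$ and $C_3=K^{-1}(C)$, hence $\cS=K^{-1}$. Conversely, if $\cS=K^{-1}$, then $A^a=K^{-1}(A)=\cS(A)=A_3$, and since $A_3$ always lies on $EF=T_P(BC)$ this forces $A^a\in EF$, whence $P\in\iota(\li)$ by the key fact. The degenerate possibilities cause no trouble: if $P=G$ then $P'=G$, so $\cS=K\circ K\neq K^{-1}$, and $G\notin\iota(\li)$; and if $P$ is an infinite point, then $P\notin\iota(\li)$ and the converse argument of this paragraph applies verbatim (the traces $E,F$ of $P$ and hence the line $EF$ are still ordinary) to give $\cS\neq K^{-1}$.

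\textbf{Where the work lies.} The one step I expect to require genuine computation is the ``key fact'': proving cleanly that $K^{-1}(A)\in EF$ exactly on the Steiner circumellipse and pinning down its position there. This is the only place a coordinate identity really intrudes — alternatively one could run it through the polarity in the inscribed conic $\cI$ of Theorem~\ref{thm:2.7}, using that $EF$ is the polar of $A$ with respect to $\cI$ — and everything else in the argument is a reassembly of Lemma~\ref{lem:2.3} together with Theorems~\ref{thm:2.2}, \ref{thm:2.4}, and~\ref{thm:2.6}.
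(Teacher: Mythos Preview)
Your overall strategy --- reducing $\cS=K^{-1}$ to the three incidences $K^{-1}(A)=A_{3}$, $K^{-1}(B)=B_{3}$, $K^{-1}(C)=C_{3}$ and then pinning down $K^{-1}(A)$ on $EF$ by directed ratio --- is sound, but there is a real slip in the ``key fact''. The claim that Ceva's theorem turns $\tfrac{AE}{AF}\cdot\tfrac{AB}{AC}$ into $\tfrac{DC}{BD}$ is not correct as stated. Setting $x=\tfrac{AE}{AC}$ and $y=\tfrac{AF}{AB}$, Lemma~\ref{lem:2.3} gives $\tfrac{EA^{*}}{A^{*}F}=x/y$, while Ceva gives $\tfrac{DC}{BD}=\tfrac{y(1-x)}{x(1-y)}$; equality of the two is \emph{equivalent} to $x+y=xy$, i.e.\ to the condition $\tfrac{AC}{AE}+\tfrac{AB}{AF}=1$ you already derived. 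So Ceva alone does nothing here --- you must feed the Steiner hypothesis back in at this step. A cleaner route avoids the ratio computation altogether: once you know $A^{a}\in EF$ and $A^{a}\in AG$, the Collinearity Theorem~\ref{thm:2.4} gives $A^{a}=AG\cdot EF=A_{2}$; and on $\iota(\li)$ one has $P'$ infinite, hence $Q=K(P')=P'$, whence $D_{2}=AQ\cdot BC=AP'\cdot BC=D_{3}$ and $A_{2}=A_{3}$.

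Your route is also quite different from the paper's. The paper stays fully synthetic: for the forward direction it identifies the anticevian triangle of $Q$ directly as $K(DEF)$ (via a midpoint and harmonic-conjugate argument that exploits $P'$ being infinite) and then reads off $T_{P}T_{P'}=K^{-1}$ from Corollary~\ref{cor:2.8}(a); for the converse it observes that $A_{3}B_{3}C_{3}=K^{-1}(ABC)$ forces $AA_{3},BB_{3},CC_{3}$ through $G$, whence by Theorem~\ref{thm:2.4} $A_{3}=A_{2}$, so $D_{3}=D_{2}$ and $P'=Q$. Your approach has the virtue of isolating the entire content of the theorem in the single incidence $K^{-1}(A)\in EF$, but it leans on the barycentric identity $uv+vw+wu=0$ for the Steiner circumellipse, which is precisely the kind of coordinate input the paper is set up to avoid.
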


\begin{proof}
Assume $P$ lies on $\iota(\li)$, so that the point $P' = Q$ is an infinite point. Let $A', B', C'$ be the midpoints of segments $AD_3, BE_3, CF_3$. We claim that $A'B'C'$ is the anticevian triangle of $Q$ with respect to $ABC$. Note that $A' = M'_d, B' = M'_e, C' = M'_f$ in the notation of Theorem \ref{thm:1.1}. By the corollary to that theorem, $K(DEF) = A'B'C'$. Thus, the sides of $A'B'C'$ are parallel to the sides of $DEF$. By Grinberg's Theorem 3.8 we just have to show that $ABC$ is inscribed in $A'B'C'$. We note that the complete quadrangle $ABCP'$ has the diagonal triangle $D_3E_3F_3$. By the Collinearity Theorem (\ref{thm:2.4}) we know that $AA'_4P'D_3$ is a collinear set of points, where $A'_4$ lies on $E_3F_3$. Thus, by the definition of a harmonic set, $A$ and $P'$ are harmonic conjugates with respect to $A'_4$ and $D_3$.  This implies that $A$ is the midpoint of $D_3A'_4$. Now $B'$ and $C'$ are the midpoints of $BE_3$ and $CF_3$. Since $B, C$, and $D_3$ are collinear, as are $E_3, F_3$, and $A'_4$, and furthermore the lines $BE_3, CF_3$, and $D_3A'_4 = AP'$ are parallel, it is clear that the respective midpoints $B', C'$, and $A$ are collinear as well. Arguing the same with the other vertices shows that $ABC$ is inscribed in $A'B'C'$. Hence, $A'B'C' = K(DEF) = KT_P(ABC)$ is the anticevian triangle of $Q$. From Corollary \ref{cor:2.8} we deduce that $KT_P(ABC) = T_{P'}^{-1}(ABC)$, whence the desired equation $T_PT_{P'} = K^{-1}$ follows.\\
\\
Conversely, suppose that $T_PT_{P'} = K^{-1}$. Then $T_P(D_3E_3F_3) = A_3B_3C_3 = K^{-1}(ABC)$ is the anticomplementary triangle of $ABC$, from which it is clear that $AA_3, BB_3$, and $CC_3$ all pass through the centroid $G$ of $ABC$. Theorem \ref{thm:2.4} implies that $A_3B_3C_3 = A_2B_2C_2$, hence $D_3E_3F_3 = D_2E_2F_2$, which gives that $P' = Q$. Hence, the point $P'$ coincides with its complement and must be infinite (since $P$ cannot be $G$), i.e., $P$ lies on $\iota(\li)$.
\end{proof}

\begin{cor}\label{cor:2.11} If $P$ lies on the Steiner circumellipse $\iota(\li)$, then the triangle $A_2B_2C_2 = A_3B_3C_3$ is the anticomplementary triangle of $ABC$; the anticevian triangle of $Q$ with respect to $ABC$ is the triangle $K(DEF)$; and the anticevian triangle of $Q'$ is the triangle $A'_0B'_0C'_0$.
\end{cor}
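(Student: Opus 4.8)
The plan is to deduce all three assertions from the identity $T_PT_{P'}=K^{-1}$ established in Theorem~\ref{thm:2.11}, together with facts already in hand. First I would pin down the shape of the configuration. Since the Steiner circumellipse $\iota(\li)$ is a bounded ellipse, $P$ is an ordinary point, so $Q'=K(P)$ is ordinary; on the other hand $P'=\iota(P)$ is infinite, and since $K$ is a homothety it is the identity on $\li$, so $Q=K(P')=P'$ as points. In particular $D_2=AQ\cdot BC=AP'\cdot BC=D_3$, and likewise $E_2=E_3$ and $F_2=F_3$.

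For the first assertion, applying $T_P$ to the equality $D_2E_2F_2=D_3E_3F_3$ and using $A_j=T_P(D_j)$ gives $A_2B_2C_2=A_3B_3C_3$, and then
\[
A_3B_3C_3 = T_P(D_3E_3F_3) = T_PT_{P'}(ABC) = K^{-1}(ABC),
\]
which is by definition the anticomplementary triangle of $ABC$. The second assertion is essentially already contained in the proof of Theorem~\ref{thm:2.11}, where the triangle $K(DEF)=KT_P(ABC)$ is identified as the anticevian triangle of $Q$; alternatively, Corollary~\ref{cor:2.8}(a) says this anticevian triangle equals $T_{P'}^{-1}(ABC)$, and $T_PT_{P'}=K^{-1}$ rewrites as $T_{P'}^{-1}=KT_P$, giving $T_{P'}^{-1}(ABC)=KT_P(ABC)=K(DEF)$.

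For the third assertion I would apply Corollary~\ref{cor:2.8}(a) with $P'$ in place of $P$; since $\iota$ is an involution this substitution interchanges $Q$ and $Q'$, so it yields that the anticevian triangle of $Q'$ with respect to $ABC$ is $T_P^{-1}(ABC)$. On the other hand, the points $D_0,E_0,F_0$ are the traces of the centroid and hence do not depend on $P$, so $D'_0E'_0F'_0=D_0E_0F_0=K(ABC)$, and therefore $A'_0B'_0C'_0=T_{P'}(D'_0E'_0F'_0)=T_{P'}K(ABC)$. Finally, $T_PT_{P'}=K^{-1}$ gives $T_P^{-1}=T_{P'}K$, whence $T_P^{-1}(ABC)=T_{P'}K(ABC)=A'_0B'_0C'_0$, with matching vertices. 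None of this is deep; the only points that need care are the bookkeeping of the $P\leftrightarrow P'$ symmetry and the observation that, although $Q=P'$ is infinite, its anticevian triangle is still well defined — it is the polar triangle of $\li$ with respect to the inconic $\mathcal{I}$, as used in Theorem~\ref{thm:2.7}.
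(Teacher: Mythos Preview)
Your argument is correct and is exactly the intended one: the paper gives no separate proof of this corollary because all three claims drop out of the identity $T_PT_{P'}=K^{-1}$ of Theorem~\ref{thm:2.11} together with Corollary~\ref{cor:2.8}(a), precisely as you have written. The only quibble is your closing aside: the anticevian triangle of $Q$ is not literally ``the polar triangle of $\li$'' with respect to $\mathcal{I}$ (the polar of $\li$ is the single point $Q$); what Theorem~\ref{thm:2.7} actually shows is that its sides are the polars of the midpoints $A_0,B_0,C_0$, and in any case Corollary~\ref{cor:2.8}(a) already gives you the well-definedness you need via $T_{P'}^{-1}(ABC)$.
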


\end{section}

\noindent Dept. of Mathematics, Carney Hall\\
Boston College\\
140 Commonwealth Ave., Chestnut Hill, Massachusetts, 02467-3806\\
{\it e-mail}: igor.minevich@bc.edu
\bigskip

\noindent Dept. of Mathematical Sciences\\
Indiana University - Purdue University at Indianapolis (IUPUI)\\
402 N. Blackford St., Indianapolis, Indiana, 46202\\
{\it e-mail}: pmorton@math.iupui.edu

\end{document}